\documentclass{amsart}

\usepackage{tikz-cd}

\usepackage{amsmath}
\usepackage{amssymb}
\usepackage{amsthm}
\newtheorem{thm}{Theorem}[section]
\newtheorem{cor}[thm]{Corollary}

\newtheorem{prop}[thm]{Proposition}

\newtheorem{qu}[thm]{Question}
\newtheorem{ex}[thm]{Example}

         \def\a{\alpha} \def\b{\beta} \def\d{\delta}
\def\g{\gamma}  
  
 \def\sm{\setminus} \def\o{\omega}
\def\to{\rightarrow}  \def\imp{\Rightarrow}
\def\r{\upharpoonright}  \def\cl{\overline}
  \def\0{\emptyset}
 
\def\<{\langle} \def\>{\rangle}

\def\F{\mathcal F}
\def\V{\mathcal V}
\def\vf{\varphi}

\begin{document}

\title{Continuous Selections of Lower semicontinuous Set-valued Mappings}

\author[Z. Feng]{Ziqin Feng}
\address{Department of Mathematics and Statistics\\Auburn University\\Auburn, AL~36849, USA}
\email{zzf0006@auburn.edu}

\author[G. Gruenhage]{Gary Gruenhage}
\address{Department of Mathematics and Statistics\\Auburn University\\Auburn, AL~36849, USA}
\email{gruengf@auburn.edu}

\author[R. Shen]{Rongxin Shen}
\address{Department of Mathematics, Taizhou University, Taizhou 225300, P. R. China}
\email{srx20212021@163.com}

\dedicatory{Dedicated to A.V. Arhangel'skii on the occasion of his 80th birthday.}

\begin{abstract} A space $X$ is strongly $Y$-selective (resp., $Y$-selective)  if every lower semicontinuous mapping from $Y$ to the nonempty subsets (resp., nonempty closed subsets) of $X$ has a continuous selection.  We also call $X$ (strongly) $C$-selective if it is (strongly) $Y$-selective for any countable space $Y$, and (strongly) $L$-selective if it is (strongly) ($\omega+1$)-selective.   E. Michael showed that every first countable space is strongly $C$-selective.  We extend this by showing that every $W$-space in the sense of the second author is strongly $C$-selective.  We also show that every GO-space is $C$-selective, and that every $L$-selective space has Arhangel'skii's  property $\alpha_1$.  We obtain an example under $\mathfrak p=\mathfrak c$ of a strongly $L$-selective space that is not $C$-selective, and we show that it is consistent with and independent of ZFC that a space is strongly $L$-selective iff it is $L$-selective and Fr\'echet.  Finally, we answer a question of the third author and Junnila by showing that the ordinal space $\omega_1 +1$ is not self-selective. \end{abstract}

\date{\today}
\keywords{set-valued mapping, lower semicontinuous, continuous selection, (strongly) $L$-selective, (strongly) $C$-selective, $W$-space, $GO$-space, Martin's Axiom }

\subjclass[2010]{54C65}
\maketitle

\section{introduction}
 For topological spaces $Y$ and $X$, a set-valued mapping $\varphi:Y\to (\mathcal{P}(X)\setminus\{\emptyset\})$ is said to be {\it lower semicontinuous} ({\it l.s.c.}) if for every open $U\subset X$, the set $\varphi^{-1}(U)=\{y\in Y: \varphi(y)\cap U\neq\emptyset\}$ is open in $Y$. Equivalently, $\varphi(\cl{A})\subseteq \cl{\varphi(A)}$ for every $A\subset Y$, where by $\varphi(B)$ we mean $\bigcup\{\varphi(y):y\in B\}$. We say $\varphi$ is {\it closed-valued} if $\varphi(y)\in \mathcal{F}(X)$ for each $y\in Y$, where $\mathcal{F}(X)$ is the family of all nonempty closed  subsets of $X$. A {\it selection} of  $\varphi$  is a mapping $f:Y\to X$ such that $f(y)\in \varphi(y)$ for every $y\in Y$. Michael \cite{m81} proved the following result.

 \begin{thm}\label{m81} Assume that $Y$ is countable and regular, $X$ is first countable, and $\varphi: Y\rightarrow (\mathcal{P}(X)\setminus\{\emptyset\})$ is l.s.c.. Then $\varphi$ has a continuous selection. \end{thm}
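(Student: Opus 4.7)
The plan is to enumerate $Y = \{y_n : n < \omega\}$ and to construct the selection by recursion on $n$, using first countability of $X$ to supply countable local bases and using lower semicontinuity of $\varphi$ at each step. First, for each $x \in X$ I fix a decreasing local base $\{B_m(x) : m < \omega\}$ with $B_0(x) = X$. By recursion on $n$, I will produce a point $f(y_n) \in \varphi(y_n)$ together with a decreasing sequence $V_n^0 \supseteq V_n^1 \supseteq \cdots$ of open neighborhoods of $y_n$ in $Y$; the $V_n^m$ are intended to witness continuity of $f$ at $y_n$ at the level $B_m(f(y_n))$.

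At stage $n$, I set $m_k(n) = \sup\{m : y_n \in V_k^m\}$ for each $k < n$ and choose $f(y_n)$ to be any point of $\varphi(y_n) \cap \bigcap_{k < n} B_{m_k(n)}(f(y_k))$. I then define $V_n^m$ to be an open neighborhood of $y_n$ contained in each of: (i) $\varphi^{-1}(B_m(f(y_n)) \cap \bigcap_{k<n} B_{m_k(n)}(f(y_k)))$, which is open in $Y$ by lower semicontinuity and contains $y_n$; (ii) each previously chosen $V_k^{m_k(n)}$, to maintain nesting; and (iii) $Y \setminus \{y_j : j < m,\ j \neq n\}$, so that $\bigcap_m V_n^m = \{y_n\}$ (using $T_1$). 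Folding the accumulated intersection $\bigcap_{k < n} B_{m_k(n)}(f(y_k))$ into the $\varphi^{-1}$ clause of (i) is the key device meant to make later stages feasible.

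The main obstacle I expect is showing that the recursion does not get stuck: at every $n$, the set from which $f(y_n)$ is chosen must be nonempty. Each pairwise intersection $\varphi(y_n) \cap B_{m_k(n)}(f(y_k))$ is nonempty because $y_n \in V_k^{m_k(n)} \subseteq \varphi^{-1}(B_{m_k(n)}(f(y_k)))$, but joint nonemptiness requires the levels $m_k(n)$ not to tighten the constraint beyond what the stage-$k$ design anticipated. This is where regularity of $Y$ must enter: it should let me refine each $V_{n-1}^m$ (e.g., via closed neighborhoods inside open ones) so that any future $y_n \in V_{n-1}^m$ sits in the same layers $V_j^{m_j(n-1)}$ as $y_{n-1}$ and no deeper, transferring the feasibility enjoyed at stage $n-1$ to stage $n$. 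Once the recursion is complete, continuity of $f$ at each $y_n$ is immediate from (i): given an open $U \ni f(y_n)$, pick $m$ with $B_m(f(y_n)) \subseteq U$; then $V_n^m$ is an open neighborhood of $y_n$ with $f(V_n^m) \subseteq B_m(f(y_n)) \subseteq U$.
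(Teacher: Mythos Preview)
The paper does not prove this theorem; it is Michael's result, cited from \cite{m81} as background, so there is no paper proof to compare against. Your recursive scheme has the right shape and you correctly isolate the crux: joint nonemptiness of $\varphi(y_n)\cap\bigcap_{k<n}B_{m_k(n)}(f(y_k))$. But your proposed fix via regularity---``closed neighborhoods inside open ones''---does not accomplish the ``no deeper'' condition you need. What must be arranged is that $V_{n-1}^m$ be disjoint from each $V_j^{m_j(n-1)+1}$; regularity separates a point from a \emph{closed} set, while $V_j^{m_j(n-1)+1}$ is open, and shrinking so that $\overline{V_j^{m+1}}\subset V_j^m$ does not help either, since $y_{n-1}\in V_j^{m_j(n-1)}$ may still lie in $\overline{V_j^{m_j(n-1)+1}}$.

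The repair is to use zero-dimensionality: a countable regular space has a clopen base, and \emph{that} is where the regularity hypothesis actually enters. Take each $V_k^m$ clopen and strengthen (ii) to require also $V_n^m\cap V_k^{m_k(n)+1}=\emptyset$ for every $k<n$ (possible since these clopen sets miss $y_n$ by definition of $m_k(n)$). Then any later $y_{n'}\in V_n^m$ satisfies $m_k(n')=m_k(n)$ exactly, and feasibility at stage $n$ follows from condition~(i) applied at the largest $j<n$ with $y_n\in V_j^0$. You also need to treat explicitly the case $y_n\notin V_{n-1}^0$ by falling back to that largest $j$, and the case where no such $j$ exists (the choice in $\varphi(y_n)$ is then unconstrained). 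With these adjustments the argument goes through; your continuity verification at the end is fine once you take $m>n$ so that condition~(iii) excludes all earlier $y_j$ from $V_n^m$.
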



 Some applications and developments of Michael's theory of continuous selection are discussed in \cite{RS94}. One of the problems in selection theory is what conditions should be imposed on $X$ and $Y$ such that any l.s.c. closed-valued mapping from $Y$ to $X$ has a continuous selection. In this note, we say that the space $X$ is  {\it (strongly) $Y$-selective} if every l.s.c. mapping from $Y$ to  $\mathcal{F}(X)$ (respectively, $\mathcal{P}(X)\setminus\{\emptyset\}$ ) has a continuous selection.  We also say that $X$ is {\it $L$-selective}  if it is ($\omega+1$)-selective . If $X$ is (strongly) $Y$-selective for all countable regular $Y$, then we say that it is {\it (strongly) $C$-selective}. By Michael's theorem above, any first countable space is strongly $C$-selective. In this paper, we discuss conditions that make $X$ (strongly) $L$-selective or (strongly) $C$-selective, and illustrative examples are constructed.

A space $X$ is said to be {\it self-selective} if it is $X$-selective.  In Section~\ref{l-sel}, we prove that $\omega_1+1$ is not self-selective; this answers negatively a question raised by Shen and Junnila in \cite{sj}. We also show that every $L$-selective space has Arhangel'skii's property $\alpha_1$.

In Section~\ref{c-sel}, we extend Michael's theorem by showing that any $W$-space (see the definition in Section~\ref{c-sel}) is strongly $C$-selective. We also show that every GO-space is $C$-selective, and an example of a strongly $C$-selective non-$W$-space is constructed.

In Section~\ref{l-sel-nq-sel}, an example of $L$-selective but not $\mathbb Q$-selective space is constructed assuming $\mathfrak p=\mathfrak c$, where $\mathbb Q$ is the space of rational numbers.
In Section~\ref{L-not-strong-L},  we prove that it is consistent with and independent of ZFC that a space is strongly $L$-selective iff it is $L$-selective and Fr\'echet. In the final section, we give a chart summarizing our main results, along with a list of open questions.

All spaces are assumed to be at least $T_1$.

\section{Basic properties}\label{basic}

We start with some general properties of $Y$-selective spaces. The following result follows directly from the definition.

\begin{prop}\label{c_subs_r} If a space $X$ is strongly $Y$-selective (resp., $Y$-selective), then so is any subspace (resp., closed subspace) of $X$.  \end{prop}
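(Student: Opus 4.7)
The plan is to reduce selections on a subspace to selections on the whole space by viewing any relevant lower semicontinuous mapping as taking values in $X$ rather than in $Z$, then applying the hypothesis on $X$ and observing that the resulting selection automatically lands in $Z$.

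For the first (strong) assertion, let $Z\subseteq X$ be arbitrary and let $\varphi:Y\to\mathcal{P}(Z)\sm\{\0\}$ be l.s.c. I regard $\varphi$ as a map $\tilde\varphi:Y\to\mathcal{P}(X)\sm\{\0\}$ by the inclusion $\mathcal{P}(Z)\sm\{\0\}\hookrightarrow\mathcal{P}(X)\sm\{\0\}$. The key observation is that $\tilde\varphi$ remains l.s.c.: for any open $U\subseteq X$, since $\varphi(y)\subseteq Z$ we have
\[
\tilde\varphi^{-1}(U)=\{y\in Y:\varphi(y)\cap U\neq\0\}=\{y\in Y:\varphi(y)\cap(U\cap Z)\neq\0\}=\varphi^{-1}(U\cap Z),
\]
which is open in $Y$ because $U\cap Z$ is open in $Z$ and $\varphi$ is l.s.c. as a map into $Z$. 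Strong $Y$-selectivity of $X$ then gives a continuous $f:Y\to X$ with $f(y)\in\tilde\varphi(y)=\varphi(y)\subseteq Z$ for all $y$, so $f$ is in fact a continuous selection of $\varphi$ into $Z$.

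For the second (closed-valued) assertion, the only extra ingredient is that the closedness hypothesis on $Z$ is needed to ensure that $\tilde\varphi$ is still closed-valued. Given $\varphi:Y\to\mathcal{F}(Z)$, each $\varphi(y)$ is closed in $Z$; because $Z$ is closed in $X$, each $\varphi(y)$ is also closed in $X$, so $\tilde\varphi:Y\to\mathcal{F}(X)$ is well defined. The lower semicontinuity check and the extraction of the selection then proceed exactly as before, using $Y$-selectivity of $X$.

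I do not anticipate any genuine obstacle; the argument is a direct unwinding of the definitions. The only point worth flagging is why the closedness of $Z$ cannot be dropped in the second assertion: without it, closed-in-$Z$ values need not lie in $\mathcal{F}(X)$, and the hypothesis of $Y$-selectivity of $X$ (which quantifies only over closed-valued mappings) would not be applicable to $\tilde\varphi$.
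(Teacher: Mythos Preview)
Your argument is correct and is exactly the direct verification the paper has in mind; the paper itself gives no proof beyond the remark that the result ``follows directly from the definition.'' The one small point you might make explicit is that a continuous $f:Y\to X$ whose range lies in $Z$ is automatically continuous as a map into the subspace $Z$, but this is standard.
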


\begin{prop}\label{c_subs_d} Let $X$, $Y$, and $Z$ be spaces, with $Z\subset Y$,  and suppose $Z$ is a closed subspace of $Y$.  If $X$ is (strongly) $Y$-selective, then it is also (strongly) $Z$-selective. \end{prop}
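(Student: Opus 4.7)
The natural plan is to reduce $Z$-selectivity to $Y$-selectivity by taking an arbitrary l.s.c.\ mapping $\psi : Z \to \mathcal{F}(X)$ (respectively $\psi : Z \to \mathcal{P}(X)\setminus\{\emptyset\}$) and extending it trivially over $Y$. The obvious extension is $\varphi : Y \to \mathcal{P}(X)\setminus\{\emptyset\}$ given by $\varphi(y) = \psi(y)$ for $y \in Z$ and $\varphi(y) = X$ for $y \in Y \setminus Z$. Note that $X$ is closed in itself, so $\varphi$ takes closed values whenever $\psi$ does. If $X = \emptyset$ the statement is vacuous, so we may assume $\varphi$ is well-defined and nonempty-valued.

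The one thing that really needs checking is lower semicontinuity of $\varphi$. Let $U$ be a nonempty open subset of $X$. For $y \in Y \setminus Z$ we have $\varphi(y) \cap U = U \neq \emptyset$, so $Y \setminus Z \subseteq \varphi^{-1}(U)$, while on $Z$ the preimage agrees with $\psi^{-1}(U)$. Since $\psi$ is l.s.c.\ on $Z$, there is $V$ open in $Y$ with $\psi^{-1}(U) = V \cap Z$. An easy set-theoretic check shows
\[
\varphi^{-1}(U) \;=\; (V \cap Z) \cup (Y \setminus Z) \;=\; V \cup (Y \setminus Z),
\]
and this is open in $Y$ because $Z$ is closed. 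For $U = \emptyset$ the preimage is empty. Hence $\varphi$ is l.s.c.

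Having verified this, the rest is immediate: since $X$ is (strongly) $Y$-selective, $\varphi$ admits a continuous selection $f : Y \to X$, and then $f\!\upharpoonright\! Z$ is a continuous selection for $\psi$. Thus $X$ is (strongly) $Z$-selective. The only place where any real obstacle could arise is in the l.s.c.\ verification, and it is precisely the hypothesis that $Z$ is closed in $Y$ that makes $Y \setminus Z$ open and therefore guarantees openness of $\varphi^{-1}(U)$; without closedness of $Z$, the trivial extension would generally fail to be l.s.c.
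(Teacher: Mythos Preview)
Your proof is correct and follows essentially the same approach as the paper: extend $\psi$ to all of $Y$ by setting $\varphi(y)=X$ on $Y\setminus Z$, verify that the extension is l.s.c., and restrict a continuous selection back to $Z$. The only difference is cosmetic---you verify lower semicontinuity via the open-preimage definition, whereas the paper uses the equivalent closure characterization $\varphi(\overline{A})\subseteq\overline{\varphi(A)}$.
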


\begin{proof} Let $\phi$ be an l.s.c. mapping from $Z$ to the nonempty closed subsets (resp., the nonempty subsets) of $X$ . Then define the mapping $\phi^\ast$ by $\phi^\ast(y)=\phi(y)$ for $y\in Z$ and $\phi^\ast(y)=X$ for $y\in Y\setminus Z$. Take a subset $A$ of $Y$. If $A\setminus Z\neq \emptyset$, it is clear that $\phi^{\ast}(\overline{A})\subset \overline{\phi^\ast(A)}=X$. If $A\subset Z$, then  $\phi^{\ast}(\overline{A})\subset \overline{\phi^\ast(A)}$ since $\phi$ is l.s.c. Therefore, $\phi^\ast$ is l.s.c. The restriction to $Z$ of a continuous selection of $\phi^\ast$  is a continuous selection of $\phi$. Hence $X$ is $Z$-selective. \end{proof}

By Proposition~\ref{c_subs_r}, strongly $L$-selective and strongly $C$-selective spaces are hereditary classes, and without ``strongly" they are closed hereditary.  We now consider open subspaces.

Let $K$ be a compact scattered space. Recall the Cantor--Bendixson process: let $K^{(0)}$  be the set of the isolated elements in $K$, and
inductively let
$K^{(\beta)}$ be the set of isolated elements in $K\setminus
(\bigcup_{\gamma<\beta}K^{(\gamma)})$. Also write $K^{(\leq
\beta)}$ for $\bigcup_{\gamma\leq \beta} K^{(\gamma)}$ and $K^{(\geq
\beta)}$ for $\bigcup_{\gamma\geq \beta} K^{(\gamma)}$. This process terminates for some minimal ordinal $\alpha+1$, called the {\it scattered height} of $K$, when   $K^{(\alpha+1)}=\emptyset$. It is known that any compact scattered space is zero-dimensional (has a base of clopen sets).

\begin{prop} Let $K$ be a compact scattered space. If a space $X$ is regular and $K$-selective, then any open subspace of $X$ is $K$-selective.  \end{prop}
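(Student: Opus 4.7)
The plan is to take an open subspace $U\subset X$ and an l.s.c.\ mapping $\varphi\colon K\to\mathcal F(U)$, and to obtain a continuous selection by applying the $K$-selectivity of $X$ to a carefully shrunk l.s.c.\ mapping into $\mathcal F(X)$ whose values are forced to lie inside $\varphi(y)$. The naive extension $y\mapsto \overline{\varphi(y)}^X$ is l.s.c., but its selections may land on the frontier of $U$, so something subtler is required.

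For each $y\in K$, pick $x_y\in\varphi(y)$ and, by regularity of $X$, choose an open $V_y$ with $x_y\in V_y\subset \overline{V_y}^X\subset U$. The set $\varphi^{-1}(V_y)$ is open in $K$ by lower semicontinuity, and since $K$ is zero-dimensional (compact scattered), I can pick a clopen $W_y$ with $y\in W_y\subset \varphi^{-1}(V_y)$. Compactness of $K$ yields a finite subcover $W_{y_1},\dots,W_{y_n}$, which I refine to a clopen partition $K=K_1\sqcup\cdots\sqcup K_n$ by setting $K_i=W_{y_i}\setminus(W_{y_1}\cup\cdots\cup W_{y_{i-1}})$, discarding empty pieces. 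By construction, $K_i\subset \varphi^{-1}(V_{y_i})$.

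On each $K_i$ define $\tilde\varphi_i(y)=\overline{\varphi(y)\cap V_{y_i}}^X$; this is nonempty (as $K_i\subset\varphi^{-1}(V_{y_i})$) and closed in $X$. For any open $W\subset X$, the fact that an open set meets $\overline{A}$ iff it meets $A$ gives $\tilde\varphi_i^{-1}(W)=\varphi^{-1}(V_{y_i}\cap W)\cap K_i$, which is open in $K_i$; hence $\tilde\varphi_i$ is l.s.c. Since $K_i$ is closed in $K$, Proposition~\ref{c_subs_d} provides a continuous selection $f_i\colon K_i\to X$ of $\tilde\varphi_i$. The decisive point is that $\tilde\varphi_i(y)\subset \overline{V_{y_i}}^X\subset U$ and $\tilde\varphi_i(y)\subset \overline{\varphi(y)}^X$; since $\varphi(y)$ is closed in $U$, the open set $U\setminus\varphi(y)$ is disjoint from $\varphi(y)$, hence from $\overline{\varphi(y)}^X$, so $\overline{\varphi(y)}^X\cap U=\varphi(y)$ and therefore $\tilde\varphi_i(y)\subset\varphi(y)$. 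Thus each $f_i$ is already a selection of $\varphi|_{K_i}$, and the $f_i$ glue along the finite clopen partition into a continuous selection $f\colon K\to U$ of $\varphi$.

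The only real obstacle is the one the construction is designed to defeat: preventing the selection obtained from $X$'s $K$-selectivity from escaping $U$. The shrinking by $V_{y_i}$ with $\overline{V_{y_i}}^X\subset U$ is what confines it to $U$, and closedness of $\varphi(y)$ in $U$ then upgrades ``in $U$'' to ``in $\varphi(y)$''. Regularity is used precisely to produce those shrinking neighborhoods, while compactness plus zero-dimensionality of $K$ is what converts this local device into a global continuous selection.
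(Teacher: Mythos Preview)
Your proof is correct and takes a genuinely different route from the paper's. The paper argues by induction on the scattered height of $K$: at the inductive step it isolates a clopen neighborhood $V$ of the finite top level $K^{(\alpha)}$, handles $V$ directly by applying $V$-selectivity of a single closed set $\overline{U}\subset X'$ to $\varphi_1=\varphi\cap\overline{U}$, and disposes of $K\setminus V$ via the inductive hypothesis (its scattered height has dropped). You bypass the induction entirely: compactness and zero-dimensionality of $K$ give a finite clopen partition $\{K_i\}$ in one stroke, each piece carrying its own shrinking neighborhood $V_{y_i}$, and then Proposition~\ref{c_subs_d} supplies $K_i$-selectivity of $X$ on each piece. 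Your key device---passing to $\overline{\varphi(y)\cap V_{y_i}}^X$ and observing that this closure, being trapped inside $\overline{V_{y_i}}^X\subset U$, must fall back into $\varphi(y)$ because $\varphi(y)$ is relatively closed in $U$---is a clean way to force the selection to land in $\varphi(y)$, and the accompanying l.s.c.\ check via $\tilde\varphi_i^{-1}(W)=K_i\cap\varphi^{-1}(V_{y_i}\cap W)$ is transparent. A pleasant by-product is that your argument never uses the scattered structure beyond zero-dimensionality, so it in fact proves the stronger statement with ``compact scattered'' weakened to ``compact zero-dimensional''.
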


\begin{proof}The result clearly holds if the scattered height of $K$ is $1$ in which case $K$ is a space of finite isolated points. Suppose the result holds for any compact space with scattered height $<\alpha+1$. Let $X'$ be an open subspace of $X$. Let $\varphi$ be an l.s.c. closed-valued mapping from $K$ to $X'$. Since $K^{(\alpha)}$ is finite, there is an open subset $U$ of $X$ such that $\varphi(x)\cap U\neq \emptyset$ for each $x\in K^{(\alpha)}$ and $\overline{U}\subset X'$.  Since $\varphi$ is l.s.c., $\varphi^{-1}(U)$ is open in $K$. Again, since $K^{(\alpha)}$ is finite, there is a clopen subset $V$ of $K$ satisfying $K^{(\alpha)}\subset V$. Then $X$ is $V$-selective by Proposition~\ref{c_subs_d} and hence $\overline{U}$ is $V$-selective by Proposition~\ref{c_subs_r}. Define $\varphi_1$ by $\varphi_1(x)=\varphi(x)\cap \overline{U}$ for each $x\in V$. Then $\varphi_1$ is an l.s.c. mapping from $V$ to $\mathcal{F}(\overline{U})$.  Then there is a continuous selection $f_1$ of $\varphi_1$. Also, $K\setminus V$ is a compact scattered space with scattered height clearly $<\alpha+1$. Define $\varphi_2$ by $\varphi_2(x)=\varphi(x)$ for each $x\in K\setminus V$; then $\varphi_2(x)$ is l.s.c. So by the inductive  assumption, there is a continuous selection $f_2$ of $\varphi_2$.

Let $f=f_1\cup f_2$. We claim that $f$ is a continuous selection of $\varphi$. It is clear that $f(x)\in\varphi(x)$ for each $x\in K$.  Take a subset $A$ of $K$. Let $A_1=A\cap V$ and $A_2=A\cap (K\setminus V)$. Then since $V$ is clopen, $f(\overline{A})=f(\overline{A_1})\cup f(\overline{A_2})\subset \overline{f(A_1)}\cup \overline{f(A_2)}=\overline{f(A_1)\cup f(A_2)}=\overline{f(A)}$. So $f$ is continuous.   \end{proof}

\begin{cor} Every open subspace of an $L$-selective space is $L$-selective. \end{cor}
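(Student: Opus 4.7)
The plan is to derive this as the case $K = \omega+1$ of the preceding Proposition. I would first verify that $\omega+1$ is a compact scattered space: its set of isolated points is exactly $\omega$, and the derived set $\{\omega\}$ is a singleton isolated in itself, so $\omega+1$ has scattered height $2$. Since ``$L$-selective'' means ``$(\omega+1)$-selective'' by definition, the Proposition immediately yields that every open subspace of an $L$-selective \emph{regular} space is $L$-selective.

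The one subtlety is that the Proposition hypothesizes $X$ regular, whereas the Corollary does not. To close this gap I would give a direct argument tailored to $K = \omega+1$ that avoids regularity. Given an open $X' \subset X$ and an l.s.c. closed-valued $\varphi : \omega+1 \to \mathcal{F}(X')$, fix any $p \in \varphi(\omega)$ and define $\psi : \omega+1 \to \mathcal{F}(X)$ by $\psi(n) = \varphi(n) \cup (X \setminus X')$ for $n < \omega$ and $\psi(\omega) = \{p\}$. A short check shows each $\psi(n)$ is closed in $X$ (its complement sits in $X'$ and is open there) and that $\psi$ is l.s.c.; the only nontrivial point is that for an open $U \ni p$ the l.s.c. property of $\varphi$ forces $\varphi(n) \cap U \neq \emptyset$ on a tail of $n$.

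Applying $L$-selectivity of $X$ to $\psi$ produces a continuous selection $g$ with $g(\omega) = p \in X'$. Since $X'$ is open, continuity places $g$ in $X'$ on some tail $[N, \omega]$, and there the inclusion $g(n) \in \psi(n)$ forces $g(n) \in \varphi(n)$. For the finitely many indices $n < N$, redefine the selection by picking an arbitrary point of $\varphi(n)$; this modification preserves continuity at $\omega$ (which is the only non-isolated point of $\omega+1$) and yields the required continuous selection of $\varphi$. The main (very minor) obstacle is the choice $\psi(\omega) = \{p\}$ rather than $\varphi(\omega) \cup (X \setminus X')$, which is precisely what pins the selection down inside $X'$ at the limit point and thereby circumvents the regularity hypothesis used in the Proposition.
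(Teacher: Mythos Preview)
Your first paragraph is exactly how the paper proves this Corollary: it is stated without proof immediately after the Proposition on compact scattered $K$, so the intended argument is precisely the specialization $K=\omega+1$.

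Your second part goes beyond the paper. You correctly observe that the Proposition carries a regularity hypothesis while the Corollary, as stated, does not (and the paper's standing assumption is only $T_1$). The paper simply does not address this discrepancy; your direct argument with $\psi(n)=\varphi(n)\cup(X\setminus X')$ and $\psi(\omega)=\{p\}$ is correct and genuinely removes the need for regularity. The check that $\psi$ is l.s.c.\ is straightforward (the only nontrivial case is $p\in U$, and there $U\cap X'$ witnesses via the l.s.c.\ of $\varphi$), the tail argument using openness of $X'$ is sound, and redefining on the finite initial segment is harmless since $\omega$ is the unique non-isolated point. So the two approaches trade off: the paper's route through the general Proposition handles all compact scattered $K$ at once but needs regularity of $X$ for the $\overline{U}\subset X'$ step; your tailored argument is specific to $K=\omega+1$ (exploiting that $\omega+1$ has a single limit point so finite modifications preserve continuity) but works for arbitrary $T_1$ spaces $X$.
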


\begin{prop} Every open subspace of a $C$-selective space is $C$-selective.  \end{prop}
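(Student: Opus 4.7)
The plan is to adapt the compact-scattered argument above by replacing the induction on scattered height with a clopen partition of $Y$. Let $\varphi:Y\to\mathcal F(X')$ be an l.s.c.~closed-valued mapping with $Y$ countable and regular, and (following the preceding proposition) assume $X$ is regular as well. First observe that $Y$ is zero-dimensional: being countable, regular and $T_1$, $Y$ is Lindel\"of and hence normal, so Urysohn's lemma provides, for any $y\notin B$ (closed in $Y$), a continuous $g:Y\to[0,1]$ with $g(y)=0$ and $g|_B=1$; since $g(Y)$ is countable, we may pick $r\in(0,1)\setminus g(Y)$, making $g^{-1}([0,r))$ a clopen neighborhood of $y$ disjoint from $B$.

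For each $y\in Y$, pick $p_y\in\varphi(y)\subseteq X'$, and by regularity of $X$ choose an open $U_y\subseteq X$ with $p_y\in U_y\subseteq\overline{U_y}^X\subseteq X'$. Since $\varphi$ is l.s.c., $\varphi^{-1}(U_y)$ is open in $Y$; by zero-dimensionality, there is a clopen $V_y\ni y$ contained in $\varphi^{-1}(U_y)$. Assuming $Y$ is infinite (the finite case being trivial), enumerate $Y=\{y_n:n<\omega\}$ and set $W_n=V_{y_n}\setminus\bigcup_{k<n}V_{y_k}$; these form a disjoint clopen partition of $Y$ with $\varphi(z)\cap U_{y_n}\ne\emptyset$ for every $z\in W_n$. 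On each nonempty $W_n$ define $\psi_n:W_n\to\mathcal F(\overline{U_{y_n}}^X)$ by $\psi_n(z)=\overline{\varphi(z)\cap U_{y_n}}^X$. Openness of $U_{y_n}$ yields $\psi_n^{-1}(W)=\{z\in W_n:\varphi(z)\cap U_{y_n}\cap W\ne\emptyset\}$ for open $W\subseteq X$, so $\psi_n$ is l.s.c.; and since $\overline{U_{y_n}}^X\subseteq X'$ while $\varphi(z)$ is closed in $X'$, the $X$-closure $\psi_n(z)$ coincides with the $X'$-closure of $\varphi(z)\cap U_{y_n}$ and is therefore contained in $\varphi(z)$.

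Since $\overline{U_{y_n}}^X$ is a closed subspace of the $C$-selective space $X$, it is $C$-selective by Proposition~\ref{c_subs_r}, and $W_n$ is countable and regular as a subspace of $Y$, so $\psi_n$ admits a continuous selection $f_n:W_n\to\overline{U_{y_n}}^X\subseteq X'$, which is automatically a selection of $\varphi|_{W_n}$. Gluing $f|_{W_n}=f_n$ yields a continuous selection $f:Y\to X'$ of $\varphi$ because the $W_n$ partition $Y$ into clopen pieces. The key technical point is the ``closure trick'' defining $\psi_n$: it must simultaneously preserve l.s.c.~(thanks to openness of $U_{y_n}$) and keep values inside $\varphi(z)$ (thanks to $\overline{U_{y_n}}^X\subseteq X'$), and the latter is exactly where regularity of $X$ is used, just as in the preceding proposition.
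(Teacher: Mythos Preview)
Your argument is correct, but it proves a weaker statement than the paper does: you explicitly add the hypothesis that $X$ is regular (borrowed from the preceding proposition), whereas the proposition as stated assumes nothing beyond the blanket $T_1$ hypothesis, and the paper's proof indeed works without regularity. So while there is no logical gap in your reasoning, you have not established the proposition as written.

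The paper avoids regularity by a different device. Instead of shrinking to a closed $\overline{U_y}^X\subseteq X'$ and applying $C$-selectivity there, the paper works on all of $X$: for each $y\in Y$ it defines $\varphi_y:Y\to\mathcal F(X)$ by $\varphi_y(y')=\overline{\varphi(y')}^X$ for $y'\neq y$ and $\varphi_y(y)=\{x_y\}$ for a chosen $x_y\in\varphi(y)$. This is l.s.c., so $C$-selectivity of $X$ itself yields a continuous selection $f_y$ with $f_y(y)=x_y\in X'$; then $O_y=f_y^{-1}(X')$ is an open neighborhood of $y$, and one takes a disjoint clopen refinement of $\{O_y\}$ and glues the $f_y$'s. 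The point is that on $O_y$ the values $f_y(y')$ lie in $\overline{\varphi(y')}^X\cap X'=\varphi(y')$, since $\varphi(y')$ is closed in $X'$; no regularity is needed for this containment.

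In short: both proofs partition $Y$ into clopen pieces and glue, but your approach pushes the selection problem down to closed subspaces of $X$ (requiring regularity to manufacture them inside $X'$), while the paper's ``pinning'' trick $\varphi_y(y)=\{x_y\}$ keeps the problem on $X$ and lets the open set $X'$ do the work afterwards. The paper's route buys the full statement; yours is perhaps more direct but costs a hypothesis.
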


\begin{proof} Suppose $X$ is a $C$-selective space and $Z$ is an open subspace of $X$.  Let $\varphi:Y\to \F(Z)$ be l.s.c., where $Y$ is a countable regular space. We will prove that $\varphi$ has a continuous selection.

To this end, for each $y\in Y$ pick a point $x_y\in \varphi(y)$, and define $\varphi_y:Y\to \F(X)$ by $\vf_y(y')=\cl{\vf(y')}$ if $y'\neq y$ and $\vf_y(y)=x_y$.  Suppose $U$ is an open subset of $X$.  Since $\vf(y')\cap U\neq \0 \iff \cl{\vf(y')}\cap U\neq 0$, it follows that $\vf_y^{-1}(U)$ is either $\vf^{-1}(U)$ or $\vf^{-1}(U)\sm\{y\}$ and thus is open.  Hence $\vf_y$ is l.s.c., and so has a continuous selection $f_y$.  Since $f_y(y)=x_y\in Z$, $O_y=f_y^{-1}(Z)$ is an open neighborhood of $y$.  Since $Y$ is countable and regular, it is zero-dimensional and Lindel\"of hence ultraparacompact.  Let $\V$ be a pairwise-disjoint clopen refinement of $\{O_y:y\in Y\}$.  For each $V\in \V$, let $y(V)\in Y$ be such that $V\subset O_{y(V)}$.  Then the function $f:Y\to X$ defined by $f\r V=f_{y(V)}\r V$ for each $V\in \mathcal V$ is a continuous selection for $\vf$.
\end{proof}

The following is a local property of $L$-selective spaces (and hence $C$-selective spaces too). It follows from this result that a (countably) compact space with no convergent sequences, for example $\beta\omega$, cannot be $L$-selective.

\begin{prop} Let $X$ be strongly $L$-selective (resp., $L$-selective) and $p\in X$.  Suppose  $\{H_n:n\in\omega\}$ is a countable collection of subsets (resp., closed subsets) with $p\in \cl{\bigcup_{n\in\omega}H_n}$.
Then there is a sequence of points  in $\bigcup_{n\in\omega}H_n$ which converges to $p$.  \end{prop}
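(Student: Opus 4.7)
The plan is to reduce everything to one application of the $(\omega+1)$-selective hypothesis by encoding the family $\{H_n\}$ into a single l.s.c.\ set-valued map $\varphi:\omega+1\to \mathcal{P}(X)\setminus\{\emptyset\}$ whose continuous selections automatically produce convergent sequences into $\bigcup_n H_n$.  The naive choice $\varphi(n)=H_n$, $\varphi(\omega)=\{p\}$ does \emph{not} work, because lower semicontinuity at $\omega$ would demand that every open neighborhood of $p$ meet \emph{cofinitely many} $H_n$, a property strictly stronger than the density hypothesis $p\in\overline{\bigcup_n H_n}$.

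The fix is to use cumulative unions.  After discarding any empty $H_n$ and reindexing so that $H_0\neq\emptyset$, define
\[
   \varphi(n)=\bigcup_{k\le n}H_k \quad (n<\omega), \qquad \varphi(\omega)=\{p\}.
\]
Each $\varphi(n)$ is nonempty, and in the closed-valued case is closed as a finite union of closed sets; since $X$ is $T_1$, $\{p\}$ is also closed.  Thus $\varphi$ is a valid closed-valued mapping in the $L$-selective case, and a valid set-valued mapping in the strongly $L$-selective case.

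The only nontrivial step is lower semicontinuity, which need only be checked at the non-isolated point $\omega$.  Given an open $U$ with $p\in U$, the density hypothesis produces some $k_0$ with $H_{k_0}\cap U\neq\emptyset$; then for every $n\ge k_0$, $\varphi(n)\supseteq H_{k_0}$ meets $U$, so the cofinite set $\{n:n\ge k_0\}\cup\{\omega\}$ lies in $\varphi^{-1}(U)$.  This is exactly the neighborhood of $\omega$ required.  The design point is precisely that letting $\varphi(n)$ grow with $n$ converts the single $k_0$ produced by density into the cofinal tail demanded by l.s.c.\ at the limit.

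Applying (strong) $L$-selectivity now yields a continuous selection $f:\omega+1\to X$ with $f(\omega)=p$ and $f(n)\in\bigcup_{k\le n}H_k\subseteq\bigcup_n H_n$ for each $n<\omega$; continuity at $\omega$ forces $f(n)\to p$, producing the desired sequence in $\bigcup_n H_n$ converging to $p$.
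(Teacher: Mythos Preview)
Your argument is correct and is essentially the same as the paper's: define $\psi(n)=\bigcup_{i\le n}H_i$, $\psi(\omega)=\{p\}$, verify l.s.c., and take a continuous selection. You have simply supplied the details the paper leaves to the reader (the failure of the naive $\varphi(n)=H_n$, the explicit l.s.c.\ check, and the reindexing to avoid empty initial values), but the core construction is identical.
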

\begin{proof} Suppose $X$, $p$, and $\{H_n:n\in\omega\}$ satisfy the hypotheses. Define a mapping $\psi$ from $\omega+1$ to the nonempty (closed) subsets of $X$ by $\psi(n)=\bigcup_{i\leq n}H_i$ and $\psi(\omega)=\{p\}$.  It is easy to check that $\psi$ is l.s.c., so let $f$ be a continuous selection.  Then $f(n)$, $n\in\omega$, converges to $f(\omega)=p$.
\end{proof}

Some easy consequences of this proposition are listed below.

\begin{cor}\begin{enumerate}
    \item[(a)]\label{count_tight} Every countably tight $L$-selective space is Fr\'echet;
    \item[(b)]\label{count_subspace}Every countable subspace of an $L$-selective space is Fr\'echet;
    \item[(c)]\label{sl_f} Any strongly $L$-selective space is  Fr\'echet.
\end{enumerate} \end{cor}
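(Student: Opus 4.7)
The three statements are quick consequences of the preceding proposition once we choose the sequence $\{H_n\}$ suitably, so the plan is just to specify the choice in each case and verify the closedness hypothesis when needed.

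For (c), which is the most direct, I would take an arbitrary $p\in\cl A$ in a strongly $L$-selective space and simply set $H_n=A$ for every $n\in\omega$. The proposition, in its "strong" form, requires only subsets (not closed subsets), so it applies immediately and produces a sequence in $A$ converging to $p$. Hence $X$ is Fr\'echet.

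For (a), the countable tightness is used precisely to reduce to the non-strong hypothesis. Given $p\in\cl A$, countable tightness yields a countable $B=\{b_n:n\in\omega\}\subseteq A$ with $p\in\cl B$. Set $H_n=\{b_n\}$; since $X$ is $T_1$ each $H_n$ is closed, so the $L$-selective form of the proposition applies and gives a sequence in $B\subseteq A$ converging to $p$.

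For (b), let $Y$ be a countable subspace of an $L$-selective $X$, let $p\in Y$, and suppose $p\in\cl A^{\,Y}$ for some $A\subseteq Y$. Then $p\in\cl A^{\,X}$ and $A$ is countable, say $A=\{a_n:n\in\omega\}$. Again each $H_n=\{a_n\}$ is closed in $X$, so the proposition applies in $X$ to give a sequence in $A$ converging to $p$ in $X$; since the sequence lies in $Y$ and $p\in Y$, it converges to $p$ in the subspace topology as well.

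The only mild subtlety (and the closest thing to an obstacle) is that in (a) and (b) one must work with closed $H_n$'s because $L$-selectivity without the "strongly" only controls closed-valued l.s.c.\ mappings; the $T_1$ axiom lets us use singletons, which is exactly why countability (either of the witness set in (a) or of the ambient space in (b)) is the needed hypothesis.
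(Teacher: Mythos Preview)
Your proof is correct and matches the intent of the paper, which simply lists these as easy consequences of the preceding proposition without further argument. Your choices of $H_n$ (constant $A$ for (c), singletons for (a) and (b) using $T_1$) are exactly the natural ones, and your handling of the subspace topology in (b) is fine.
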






\section{on self-selective spaces}\label{l-sel}
 In this section, we first give a negative answer to the question asked in \cite{sj} whether every compact ordinal space is self-selective.

\begin{prop}$\omega_1 +1$ is not self-selective.
\end{prop}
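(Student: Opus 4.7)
\medskip\noindent
\textbf{Proof plan.}
The strategy is to exhibit an explicit l.s.c.\ closed-valued $\varphi\colon\omega_1+1\to\mathcal{F}(\omega_1+1)$ with no continuous selection, exploiting the mismatch between the cofinality $\omega_1$ at the point $\omega_1$ and the cofinality $\omega$ at every countable limit. First I would set $\varphi(\omega_1)=\{\omega_1\}$, which pins any continuous selection $f$ to $f(\omega_1)=\omega_1$; continuity at $\omega_1$ then forces $f$ to be cofinal in $\omega_1$ on every final segment $(\gamma,\omega_1]$, i.e.\ for each $\gamma<\omega_1$ there is $\beta<\omega_1$ with $f(\alpha)>\gamma$ for all $\alpha\in(\beta,\omega_1)$.

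The heart of the construction is to define $\varphi(\alpha)$ for $\alpha<\omega_1$ so that, on the other hand, any selection $f$ is forced to satisfy $f(\lambda)<\lambda$ on a stationary set of countable limits $\lambda$. The obvious attempt -- put $\varphi(\lambda)\subseteq [0,\lambda)$ for every limit -- runs straight into Fodor's pressing-down lemma: the function $\lambda\mapsto \max\varphi(\lambda)$ is then regressive, hence constant at some $\gamma_0<\omega_1$ on a stationary set $S$, so $\varphi^{-1}\bigl((\gamma_0,\omega_1]\bigr)$ misses $S$ and l.s.c.\ at $\omega_1$ fails. To repair this one has to include additional points (for example $\omega_1$ itself, together with carefully chosen approximating successors below $\lambda$) in $\varphi(\lambda)$, calibrated so that l.s.c.\ is restored at $\omega_1$ and at each limit $\lambda$, while the fine structure of $\varphi$ on non-limit $\alpha$ prevents any continuous selection from actually choosing one of the repair points at $\lambda$ (and in particular blocks the identity map and the constant map $f\equiv\omega_1$ as selections).

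With $\varphi$ in place I would verify l.s.c.\ by checking, for each open $U\subseteq\omega_1+1$, that $\varphi^{-1}(U)$ is open; the only nontrivial checks are at the countable limits (where the imposed coherence between $\varphi(\lambda)$ and $\varphi(\alpha)$ for $\alpha\to\lambda$ is used) and at $\omega_1$ (where the repair points ensure the preimage contains a tail). Then I would suppose for contradiction that $f$ is a continuous selection. By the design of $\varphi$ at limits we get $f(\lambda)<\lambda$ on a stationary $S\subseteq\omega_1$; Fodor's lemma yields a stationary $S'\subseteq S$ on which $f$ takes a constant value $\gamma_0<\omega_1$, so $f$ is bounded by $\gamma_0$ on the cofinal set $S'$, contradicting the cofinality of $f$ on tails of $\omega_1$ derived in the first paragraph.

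The hard part is unquestionably the balancing act of Step 2: $\varphi(\lambda)$ must be small enough (and miss enough of $[\lambda,\omega_1)$) to regress every selection at countable limits, yet large enough (and structured coherently across the limits) to preserve l.s.c.\ at $\omega_1$, all while excluding the cheap continuous selections -- identity and the constants -- that arise the moment $\lambda\in\varphi(\lambda)$ or $\omega_1\in\varphi(\alpha)$ for every $\alpha$.
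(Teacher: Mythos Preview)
Your framework---pin $f(\omega_1)=\omega_1$, engineer regression on a stationary set, apply Fodor, and contradict the unboundedness of $f$ near $\omega_1$---is exactly the paper's, and your closing contradiction is fine. The gap is that you have not built $\varphi$, and the repair you sketch cannot work. Suppose $\omega_1\in\varphi(\lambda)$ for every countable limit $\lambda$, with $\varphi$ l.s.c.\ and closed-valued. Fix such a $\lambda$. For each $\gamma<\omega_1$, l.s.c.\ gives $\mu_\gamma<\lambda$ with $(\mu_\gamma,\lambda]\subseteq\varphi^{-1}\bigl((\gamma,\omega_1]\bigr)$; since $[0,\lambda)$ is countable, some fixed $\mu<\lambda$ equals $\mu_\gamma$ for unboundedly many $\gamma$, whence every $\beta\in(\mu,\lambda]$ has $\varphi(\beta)$ unbounded in $\omega_1$ and so (being closed) containing $\omega_1$. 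Thus $S=\{\alpha:\omega_1\in\varphi(\alpha)\}$ is open in $\omega_1+1$; its complement is a closed set of isolated ordinals, hence finite. But then the map equal to $\omega_1$ on $S$ and arbitrary on the finitely many exceptions is a continuous selection. No ``fine structure at non-limits'' can block this: once $\omega_1$ enters $\varphi(\lambda)$ at all limits, l.s.c.\ propagates it to a cofinite set and the constant selection survives.

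The paper avoids the tension by \emph{not} forcing $f(\lambda)<\lambda$ at limits. It sets $\varphi(\alpha)=\{\alpha\}$ on the club $C$ of limits-of-limits and at $\omega_1$, so l.s.c.\ at $\omega_1$ is automatic and any selection has $f{\upharpoonright}C=\mathrm{id}$. Regression is manufactured one level up: for each $\delta\in C$ put $\varphi(\delta+\omega)=\{\delta\}$, forcing $f(\delta+\omega)=\delta$, and put $\varphi(\delta+n)=\{\delta+n,\delta(n)\}$ where $\delta(n)\nearrow\delta$. Continuity at $\delta+\omega$ then forces $f(\delta+k_\delta)=\delta(k_\delta)<\delta$ for some $k_\delta$, so $\delta\mapsto\delta(k_\delta)$ is regressive on $C$; Fodor gives a value $\gamma$ with $f^{-1}(\gamma)$ uncountable and closed, hence a club, contradicting $f{\upharpoonright}C=\mathrm{id}$. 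The two-point sets $\{\delta+n,\delta(n)\}$ are exactly what restores l.s.c.\ at $\delta+\omega$ while still trapping $f$---that is the ``balancing act'' you were looking for, carried out at successors rather than at limits.
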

\begin{proof}Let $C$ be the set of all ordinals in $\omega_1$ that are limits of limit ordinals.   (For example, $\omega$ is not in $C$ but $\omega^2$ is in $C$. )  Note that $C$ is club (closed and unbounded).

For each $\delta$ in $C$, let $\delta(n)$, $n=1,2,\dots$, be an increasing sequence of ordinals converging to $\delta$.  Then define $\varphi$ as follows:
\begin{enumerate}
  \item[(i)] $\varphi(\delta+\omega)=\{\delta\}$ for every $\delta$ in $C$.
  \item[(ii)] $\varphi(\delta+n)=\{\delta+n, \delta(n)\}$ for every $\delta$ in $C$ and $n \in \omega$, $n>0$.
  \item[(iii)] For all other $\alpha$ in $\omega_1+1$, $\varphi(\alpha)=\{\alpha\}$.   In particular, $\varphi(\delta)=\{\delta\}$ for every $\delta$ in $C$.
\end{enumerate}

Suppose there is a continuous selection $f$.  Note that $f$ is the identity on $C$ (since $\varphi(\delta)=\{\delta\}$, there is no other choice).

Since $\varphi(\delta+\omega)=\{\delta\}$, we must have $f(\delta+\omega)=\delta$.   Then by continuity of $f$ it must be that $f(\delta+k)=\delta(k)$ for some $k$.   Choose such a $k$ and denote it by $k_\delta$.  Then $\delta(k_\delta)=f(\delta+k_\delta)$.

The mapping which sends $\delta$ to $\delta(k_\delta)$ is regressive, so by the pressing down lemma, there is some $\gamma< \omega_1$ such that the set $\{\delta: \delta(k_\delta) = \gamma\}$
is uncountable.  Thus $f^{-1}(\gamma)$ is uncountable.  It is closed too, so club.   But then the function $f$ is both the identity and constant $\gamma$ on the club $C\cap f^{-1}(\gamma)$, which is a contradiction.

It remains to check that $\varphi$ is l.s.c..  Suppose $A\subset \omega_1+1$ and $\alpha\in \varphi(\overline{A})\setminus \varphi(A)$; we need to show $\alpha\in \overline{\varphi(A)}$.
There is some $\beta\in\overline{A}\setminus A$ with $\alpha\in \varphi(\beta)$.

\smallskip

\noindent {\it Case 1. $\beta=\omega_1$.}  Then $A$ is unbounded in $\omega_1$.  It is easy to see that $\varphi(A)$ is also unbounded.   Then, since $\alpha=\omega_1$ in this case, we have $\alpha\in \overline{\varphi(A)}$  .
\smallskip

\noindent {\it Case 2.  $\beta<\omega_1$.}  Let $\{\beta_n:n\in\omega\}$ be an increasing sequence in $A$ converging to $\beta$.

\smallskip

{\it Case 2(a). $\alpha=\beta$.} In this case, if $\beta_n\in \varphi(\beta_n)$ for infinitely many $n$, we are done.  If not, then $\beta_n=\delta_n+\omega$, and so $\varphi(\beta_n)=\{\delta_n\}$,  for all sufficiently large $n$.  But then $\alpha=\beta=sup\{\beta_n:n\in\omega\}= sup\{\delta_n:n\in\omega\}$, so $\alpha\in \overline{\varphi(A)}$.

\smallskip

{\it Case 2(b). $\alpha\neq\beta$.} Then, since $\beta$ is a limit ordinal, for some $\delta\in C$ we must have $\beta=\delta+\omega$ and $\alpha=\delta$.  We then have $\beta_n=\delta+k_n$, $k_n>0$, for all sufficiently large $n$.   Thus $\delta(k_n)\in \varphi(\beta_n)$ for large enough $n$, and $\delta(k_n)$, $n\in\omega$, converges to $\delta=\alpha$.
\end{proof}

We now discuss whether the sequential fan $S_{\omega}$ and Arens' space $S_2$ are self-selective. First recall some definitions. A space $X$ is called an {\it $\alpha_1$-space} \cite{aa} if for every $x\in X$ and any countable family of sequences $\{A_{n}\}_{n\in\omega}$ of sequences converging to $x$
there is a sequence $A$ converging to $x$ such that $|A_{n}\setminus A|<\omega$ for every $n\in\omega$. The {sequential fan} $S_{\omega}$ is the space obtained by
identifying all limit points of the topological sum of $\omega$ many
convergent sequences. Arens' space $S_2=\{x\}\cup\{x_{n}:
n\in\omega\}\cup\{x_{n,m}: n,m\in\omega\}$
is defined as follows: (i) each $x_{n,m}$ is isolated; (ii) a
neighborhood base at $x_{n}$ is formed by the sets
$\{x_{n}\}\cup\{x_{n,m}: m\in\omega\setminus K\}$, where $K$
is finite; (iii) a neighborhood base at $x$ is formed by the sets
$\{x\}\cup\{x_{n}: n\in\omega\setminus K\}\cup\{x_{n,m}:
n\in\omega\setminus K\mbox{ and }m>f(n)\}$ where $K$
is finite and $f: \omega\to\omega$. It is easy to see that $S_{\omega}$ is not $\alpha_1$ and $S_{2}$ is $\alpha_1$ (see Example \ref{s2}).

\begin{prop}\label{alpha_1} Every $L$-selective $T_1$-space is $\alpha_1$.
\end{prop}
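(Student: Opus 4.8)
The plan is to prove the contrapositive: if a $T_1$-space $X$ is not $\alpha_1$, then $X$ is not $L$-selective, i.e. some l.s.c. closed-valued map from $\omega+1$ admits no continuous selection. First I would record the reformulation of $\alpha_1$ that makes the target concrete. For a point $x$ and sequences $A_n=\{a_{n,m}:m\in\omega\}\to x$, the property $\alpha_1$ at $x$ for $\{A_n\}$ amounts to choosing cofinite tails $T_n\subseteq A_n$ whose union $\bigcup_n T_n$ again converges to $x$: if $A\to x$ satisfies $|A_n\setminus A|<\omega$ for all $n$, then $T_n:=A_n\cap A$ are cofinite in $A_n$ and $\bigcup_n T_n\subseteq A\to x$, and conversely $\bigcup_n T_n$ serves as $A$. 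So a failure of $\alpha_1$ produces $x$ and sequences $A_n\to x$ for which \emph{no} choice of tails has convergent union.

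The natural first attempt is direct, mirroring the preceding local Proposition: set $\psi(\omega)=\{x\}$ and let $\psi(m)$ be the closure of a union of deep tails $\{a_{n,k}:k\ge s(n,m)\}$ with $s(n,m)\uparrow\infty$ in $m$, so that $\psi$ is l.s.c. (each neighborhood of $x$ meets cofinitely many $\psi(m)$ because every $A_n\to x$), and then invoke $L$-selectivity to obtain a continuous selection $f$ with $f(m)\to x$. The difficulty — which I expect to be the main obstacle — is that the range of $f$ contributes only one point per level, so this yields at best a sequence meeting each $A_n$ in an infinite set ($\alpha_2$/$\alpha_4$ strength) rather than in a cofinite set. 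Forcing a single continuous selection to exhaust a cofinite tail of \emph{every} $A_n$ simultaneously is exactly the extra strength that $\alpha_1$ demands, and it cannot be extracted from the range alone; this is why I turn to the contrapositive.

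Accordingly I would build, from the $\alpha_1$-failure, an l.s.c. closed-valued $\varphi:\omega+1\to X$ with $\varphi(\omega)=\{x\}$ and each $\varphi(m)$ a finite (hence closed) set of deep points of the $A_n$'s, arranged so that (i) every neighborhood of $x$ meets cofinitely many $\varphi(m)$, yet (ii) no transversal $m\mapsto f(m)\in\varphi(m)$ converges to $x$ — the point being that a convergent transversal would force a convergent union of cofinite tails, contradicting the choice of $\{A_n\}$. The crux is to secure (i) and (ii) at once: lower semicontinuity compels $\varphi(m)$ to contain points arbitrarily deep in the $A_n$'s (so as to meet every neighborhood of $x$ cofinitely), and the danger is that such depth lets a selector ``ride'' a single sequence all the way to $x$. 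The construction must therefore spread the deep points across the $A_n$'s and let the active indices drift, so that the non-existence of a convergent union of tails guarantees that every transversal is eventually trapped away from $x$. Once such a $\varphi$ is produced it witnesses directly that $X$ is not $L$-selective, completing the argument; verifying (ii) rigorously from the combinatorics of the drifting deep points is the step I expect to require the most care.
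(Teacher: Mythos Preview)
Your diagnosis of the difficulty is exactly right: a naive closed-valued $\psi$ whose levels are unions of tails only yields a selection whose range meets each $A_n$ infinitely often, which is $\alpha_2$ and not $\alpha_1$. But your proposal stops precisely where the real idea is needed. You describe the \emph{shape} of the desired $\varphi$ --- finite-valued, deep points spread across the $A_n$, active indices drifting --- and then defer the construction, saying the verification of (ii) ``is the step I expect to require the most care''. That step is not a matter of care; it is the whole proof. As stated, nothing in your outline prevents a selector from riding a single $A_n$ to $x$, and you give no mechanism that would rule this out.

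The missing device, which the paper supplies, is an \emph{anchor}. Work directly (not by contrapositive): enumerate the doubletons
\[
P_{n,m}=\{x_{0,n},\,x_{n,m}\}
\]
by a bijection $\varphi:\omega\to\{P_{n,m}:n,m\in\omega\}$, and set $\psi(i)=\varphi(i)$, $\psi(\omega)=\{x\}$. Lower semicontinuity is easy: any infinite set of indices either hits infinitely many $n$ (so the anchors $x_{0,n}$ accumulate at $x$) or hits some fixed $n$ infinitely often (so the points $x_{n,m}$ accumulate at $x$). Now let $f$ be a continuous selection and $C=\{f(i):i\in\omega\}$. For fixed $n$, the indices $i$ with $\varphi(i)\in\{P_{n,m}:m\in\omega\}$ form an infinite set $B_n$, so $f\restriction B_n\to x$. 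Since $X$ is $T_1$ and $x_{0,n}\neq x$, the selection can equal the anchor $x_{0,n}$ only finitely often on $B_n$; hence $f(i)=x_{n,m}$ for all but finitely many $i\in B_n$, and since $\varphi\restriction B_n$ is a bijection onto $\{P_{n,m}:m\}$, this forces $A_n\setminus C$ to be finite. Thus $C$ is the $\alpha_1$ witness. The anchor is exactly what your ``drifting deep points'' lacked: it gives the selector a bad alternative at every level of $B_n$, so convergence to $x$ forces it to choose the deep point cofinitely often.
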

\begin{proof} Let $X$ be an $L$-selective $T_1$-space and let $\{A_{n}\}_{n\in\omega}$ be a family of sequences converging to $x\in X$, where $A_{n}=\{x_{n,m}\}_{m\in\omega}$ for every $n\in\omega$. Without loss of generality, we may assume that $x_{n,m}\neq x_{n',m'}$ whenever $(n,m)\neq (n',m')$. Put $P_{n,m}=\{x_{0,n},x_{n,m}\}$ for every $n,m\in\omega$. Then $\mathcal{P}=\{P_{n,m}:n,m\in\omega\}$ is a countable infinite family of nonempty closed  subsets of $X$. Let $\varphi:\omega\to\mathcal{P}$ be a bijection and define $\psi:\omega+1\to\mathcal{F}(X)$ as $\psi (i)=\varphi(i)$ for $i\in\omega$ and $\psi (\omega)=\{x\}$.

We claim that $\psi$ is an l.s.c. mapping. Let $A$  be a infinite subset of $\omega$; it suffices to show that $x\in\overline{\bigcup\varphi (A)}$. If $\{n:P_{n,m}=\varphi (A)$ for some $m\in\omega\}$ is infinite, then $x_{0,n}\in\bigcup\varphi (A)$ for infinite many $n\in\omega$, and thus $x=\lim\limits_{n\to\infty}x_{0,n}\in\overline{\bigcup\varphi (A)}$. If $\{n:P_{n,m}=\varphi (A)$ for some $m\in\omega\}$ is finite, then there exists an $n_{0}\in\omega$ such that $\{m:P_{n_{0},m}\in\varphi (A)\}$ is infinite, and thus $x=\lim\limits_{m\to\infty}x_{n_{0},m}\in\overline{\bigcup\varphi (A)}$. Therefore $\psi$ is a l.s.c. mapping.

By $L$-selectivity, $\psi$ has a continuous selection $f:\omega+1\to X$. Then the sequence $C=\{f(n):n\in \omega\}$ converges to $x$. For every $n\in\omega$, let $B_{n}=\{i:\varphi (i)\in\{P_{n,m}:m\in\omega\}\}$. Obviously $B_{n}$ is a infinite subset of $\omega$, so $f(B_{n})$ is a subsequence of $C$ and thus converges to $x$. Therefore $f(i)\in A_{n}$ for all but finitely many $i\in B_{n}$. Since $\varphi$ is a bijection, then $\varphi|_{B_{n}}:B_{n}\to\{P_{n,m}:m\in\omega\}$ is also a bijection. Thus  $A_{n}\setminus f(B_{n})$ is finite. So $C$ is a sequence converging to $x$ such that $|A_{n}\setminus C|<\omega$ for every $n\in\omega$; therefore $X$ is an $\alpha_1$-space.
\end{proof}

\begin{cor} The {sequential fan} $S_{\omega}$ is not $L$-selective.
\end{cor}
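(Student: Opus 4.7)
The plan is to derive this immediately from Proposition~\ref{alpha_1}: since every $L$-selective $T_1$-space is $\alpha_1$, it suffices to show that $S_\omega$ is not an $\alpha_1$-space. The paper actually flags this as an easy observation, so the real content of the proposal is a direct witness to the failure of $\alpha_1$ at the apex of the fan.

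The natural witnesses are the prongs themselves. Let $x$ denote the identified limit point of $S_\omega$ and, for each $n\in\omega$, let $A_n=\{x_{n,m}:m\in\omega\}$ be the $n$th convergent sequence, so $A_n\to x$. Suppose for contradiction that some sequence $A$ in $S_\omega$ converging to $x$ satisfies $|A_n\setminus A|<\omega$ for every $n\in\omega$. Then $A\cap A_n$ is nonempty for each $n$, so one may choose an index $m_n$ with $x_{n,m_n}\in A$.

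I would now produce an open neighborhood of $x$ that refutes the convergence of $A$. Recalling that a neighborhood base at $x$ in $S_\omega$ consists of sets of the form $\{x\}\cup\{x_{n,m}:n\in\omega,\ m\geq f(n)\}$ for $f:\omega\to\omega$, I take $f(n)=m_n+1$ and set
\[
U=\{x\}\cup\{x_{n,m}:n\in\omega,\ m\geq m_n+1\}.
\]
By construction $x_{n,m_n}\notin U$ for every $n$, and since the points $x_{n,m_n}$ are pairwise distinct (different $n$'s live in different prongs), the set $\{x_{n,m_n}:n\in\omega\}$ is an infinite subset of $A\setminus U$, contradicting $A\to x$. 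Hence no such $A$ exists, $S_\omega$ fails $\alpha_1$, and the corollary follows.

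I do not foresee a real obstacle here; the only point requiring mild care is that the definition of $\alpha_1$ allows the diagonalizing sequence $A$ to be an arbitrary sequence converging to $x$, not one contained in $\bigcup_n A_n$. This causes no difficulty, because the argument only uses that $A$ contains one point from each $A_n$, which is forced by $|A_n\setminus A|<\omega$.
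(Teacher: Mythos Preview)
Your proposal is correct and follows exactly the paper's intended route: the corollary is stated immediately after Proposition~\ref{alpha_1}, and the paper has already remarked that ``it is easy to see that $S_\omega$ is not $\alpha_1$,'' so the deduction is precisely the one you give. Your explicit witness to the failure of $\alpha_1$ (diagonalizing against a putative $A$ via the neighborhood $U$ determined by $f(n)=m_n+1$) is the standard one and simply fills in the detail the paper left to the reader.
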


\begin{ex} \label{s2}  Arens' space $S_2$ is $\alpha_1$, but not $L$-selective.
\end{ex}
\begin{proof}
Let $S_2=\{x\}\cup\{x_{n}:
n\in\omega\}\cup\{x_{n,m}: n,m\in\omega\}$ be Arens' space with $\lim\limits_{n\to\infty}x_{n}=x$ and $\lim\limits_{m\to\infty}x_{n,m}=x_{n}$ for every $n\in\omega$.

Arens space is well-known not to be Fr\'echet, so it is not $L$-selective by Corollary~\ref{count_tight}.  To show that $S_2$ is $\alpha_1$, let $\{A_{i}\}_{i\in\omega}$ be a family of sequences converging to a point $y\in S_{2}$. Put
$$A=\left\{
\begin{array}{ll}
\{y\}, & \mbox{ if }y=x_{n,m}\mbox{ for some }n,m\in\omega,\\
\{y\}\cup\{x_{n,m}\}_{m\in\omega}\cap \bigcup_{i\in\omega} A_{i}, & \mbox{ if } y=x_{n} \mbox{ for some }n\in\omega,\\
\{y\}\cup\{x_{n}\}_{n\in\omega}\cap \bigcup_{i\in\omega} A_{i}, & \mbox{ if } y=x.
\end{array}
\right.$$ Then the sequence $A$ converges to $y$ and $|A_{i}\setminus A|<\omega$ for every $i\in\omega$. So we have that $S_2$ is $\alpha_1$.
\end{proof}

In the next section,  assuming  $\mathfrak t=\omega_1$ we will give a Fr\'echet $\alpha_1$-space which is not $L$-selective.

\begin{prop}\label{Limpliesfans} For a space $X$, the following are equivalent:
 \begin{enumerate}
   \item[(1)] $X$ is $L$-selective;
   \item[(2)] $X$ is $S_2$-selective;
   \item[(3)] $X$ is $S_{\omega}$-selective.
     \end{enumerate}
 \end{prop}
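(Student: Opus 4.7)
The equivalences $(2)\Rightarrow(1)$ and $(3)\Rightarrow(1)$ are immediate from Proposition~\ref{c_subs_d}: both $\{x\}\cup\{x_n:n\in\omega\}\subset S_2$ and $\{x\}\cup\{x_{0,m}:m\in\omega\}\subset S_\omega$ are closed subspaces homeomorphic to $\omega+1$. For the converse directions I would use a uniform technique, namely applying $L$-selectivity to the copies of $\omega+1$ sitting inside $S_\omega$ and $S_2$ after first prescribing a common value at each branching point, and then pasting the partial selections together.

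For $(1)\Rightarrow(3)$, let $\varphi:S_\omega\to\mathcal{F}(X)$ be l.s.c.\ and let $x$ be the unique non-isolated point. Pick any $p\in\varphi(x)$ and define $\varphi_p$ by $\varphi_p(x)=\{p\}$ and $\varphi_p=\varphi$ elsewhere. A short case check shows $\varphi_p$ is still l.s.c.: for open $U\subset X$ with $p\notin U$, $\varphi_p^{-1}(U)=\varphi^{-1}(U)\setminus\{x\}$ is open since $\{x\}$ is closed, while if $p\in U$ then $p\in\varphi(x)\cap U$ forces $\varphi_p^{-1}(U)=\varphi^{-1}(U)$. Each fan $F_n=\{x\}\cup\{x_{n,m}:m\in\omega\}$ is a closed copy of $\omega+1$, so $L$-selectivity applied to $\varphi_p\r F_n$ yields a continuous $f_n:F_n\to X$ with $f_n(x)=p$. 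Pasting these produces $f:S_\omega\to X$; continuity at $x$ follows because a basic neighborhood of $x$ has the form $\{x\}\cup\bigcup_n\{x_{n,m}:m\ge g(n)\}$, and continuity of each $f_n$ at $x$ supplies a suitable $g(n)$.

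For $(1)\Rightarrow(2)$, the same trick is iterated one level deeper. Given l.s.c.\ $\varphi:S_2\to\mathcal{F}(X)$, first apply $L$-selectivity to the restriction of $\varphi$ to the closed spine $\{x\}\cup\{x_n:n\in\omega\}\cong\omega+1$ to get a continuous selection $g$. Then, for each $n$, modify $\varphi$ on the branch $B_n=\{x_n\}\cup\{x_{n,m}:m\in\omega\}$ by setting its value at $x_n$ to $\{g(x_n)\}$; as before this keeps the map l.s.c.\ on $B_n\cong\omega+1$, and $L$-selectivity delivers a continuous $h_n$ with $h_n(x_n)=g(x_n)$. Paste $g$ with the $h_n$'s to form $f:S_2\to X$. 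Continuity at the isolated points and at the $x_n$'s is immediate (at $x_n$ the $S_2$-topology agrees with the $B_n$-topology), so the only real check is continuity at $x$: given open $U\ni g(x)$, continuity of $g$ gives $N$ with $g(x_n)\in U$ for $n\ge N$, and then continuity of each $h_n$ gives $M_n$ with $h_n(x_{n,m})\in U$ for $m\ge M_n$; the data $K=\{0,\dots,N-1\}$ and $f_\ast(n)=M_n$ then define a basic $S_2$-neighborhood of $x$ mapped by $f$ into $U$.

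The main obstacle is essentially bookkeeping: checking that each pointwise modification of $\varphi$ preserves lower semicontinuity, and organizing the neighborhood data at the apex of $S_2$ into a basic neighborhood of the required form. Once these are in place the two constructions run in parallel and essentially write themselves.
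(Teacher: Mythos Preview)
Your proof is correct and follows essentially the same approach as the paper: restrict to the copies of $\omega+1$, prescribe singleton values at the branch points, apply $L$-selectivity, and paste. The only cosmetic difference is that for $(1)\Rightarrow(2)$ the paper verifies continuity of the pasted map by observing that $S_2$ is sequential and $f$ preserves convergent sequences, whereas you check continuity at $x$ directly against the explicit neighborhood base; both arguments are equally short.
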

 \begin{proof} (2)$\Rightarrow$(1) and (3)$\Rightarrow$(1) follow from Lemma~\ref{c_subs_d}.

 (1)$\Rightarrow$(2). Let $S_2=\{x\}\cup\{x_{n}:
n\in\omega\}\cup\{x_{n,m}: n,m\in\omega\}$ be the Arens' space with $\lim\limits_{n\to\infty}x_{n}=x$ and $\lim\limits_{m\to\infty}x_{n,m}=x_{n}$ for every $n\in\omega$, and $\varphi:S_{2}\to\mathcal{F}(X)$ be a l.s.c mapping. Put $L=\{x\}\cup\{x_{n}:
n\in\omega\}$ and $L_{n}=\{x_{n}\}\cup\{x_{n,m}: m\in\omega\}$ for every $n\in\omega$. Fix $y\in\varphi (x)$ and define $\psi:L\to\mathcal{F}(X)$ as $\psi(x)=\{y\}$ and $\psi(x_{n})=\varphi(x_{n})$. It is easy to verify that $\psi$ is an l.s.c. mapping from $L$ to $\mathcal{F}(X)$. Since $X$ is $L$-selective, $\psi$ has a continuous selection $g:L\to X$.
Now for every $n\in\omega$, define $\psi_{n}:L_{n}\to\mathcal{F}(X)$ as $\psi_{n}(x_{n})=\{g(x_{n})\}$ and $\psi_{n}(x_{n,m})=\varphi(x_{n,m})$ for every $m\in\omega$. Then every $\psi_{n}$ is an l.s.c. mapping and thus has a continuous selection $g_{n}:L_{n}\to X$. Define $f:S_{x}\to X$ as $f(z)=g(z)$ for $z\in L$ and $f(z)=g_{n}(z)$ for $z\in L_{n}$. Obviously $f$ is a selection of $\varphi$. It is easy to verify that for every convergent sequence $S$ in $S_{2}$, $f(S)$ is also a convergent sequence in $X$. Since $S_2$ is a sequential space,  we have that $f$ is continuous. So $X$ is $S_2$-selective.

 (1)$\Rightarrow$(3). Let $S_{\omega}=\{x\}\cup\{x_{n,m}: n,m\in\omega\}$ be the sequential fan with $\lim\limits_{m\to\infty}x_{n,m}=x$ for every $n\in\omega$, and $\varphi:S_{\omega}\to\mathcal{F}(X)$ be a l.s.c mapping. Put  $L_{n}=\{x\}\cup\{x_{n,m}: m\in\omega\}$ for every $n\in\omega$. Fix $y\in\varphi (x)$ and define $\psi_{n}:L_{n}\to\mathcal{F}(X)$  as $\psi_{n}(x)=\{y\}$ and $\psi_{n}(x_{n,m})=\varphi(x_{n,m})$ for every $n,m\in\omega$. Then every $\psi_{n}$ is an l.s.c. mapping and thus has a continuous selection $g_{n}:L_{n}\to X$. Define $f:S_{\omega}\to X$ as $f(x)=y$ and $f(z)=g_{n}(z)$ for $z\in L_{n}$. Then $f$ is a continuous selection of $\varphi$.
  So $X$ is $S_{\omega}$-selective.
 \end{proof}

By the same idea, one may show that  $L$-selectivity is equivalent to $S_{\kappa}$-selectivity for any $\kappa\geq\omega$.

\begin{cor} Neither $S_{\omega}$ nor $S_2$ is self-selective.
\end{cor}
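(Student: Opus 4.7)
The plan is to deduce this corollary immediately from Proposition~\ref{Limpliesfans} together with the two non-$L$-selective facts already established for $S_\omega$ and $S_2$. Both are the same argument, so I would handle them in parallel.

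First I would recall that Proposition~\ref{Limpliesfans} gives the equivalences $L$-selective $\Leftrightarrow$ $S_2$-selective $\Leftrightarrow$ $S_\omega$-selective for any space $X$. In particular, a space that fails to be $L$-selective cannot be $S_\omega$-selective nor $S_2$-selective. So to conclude that $S_\omega$ (respectively $S_2$) is not self-selective, it suffices to recall that $S_\omega$ (respectively $S_2$) is not $L$-selective.

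For $S_\omega$: the sequential fan is well known not to be an $\alpha_1$-space (this was noted just before Proposition~\ref{alpha_1}, and the corollary immediately after that proposition records that $S_\omega$ is not $L$-selective). Applying Proposition~\ref{Limpliesfans} to $X=S_\omega$, self-selectivity of $S_\omega$ would give $S_\omega$-selectivity, hence $L$-selectivity, contradicting this. For $S_2$: Example~\ref{s2} shows that $S_2$ is not $L$-selective (it fails to be Fr\'echet, and a countably tight $L$-selective space is Fr\'echet by Corollary~\ref{count_tight}(a)). Again applying Proposition~\ref{Limpliesfans} with $X=S_2$, self-selectivity of $S_2$ would imply $S_2$ is $L$-selective, a contradiction.

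There is no real obstacle here; the entire content was packaged into the preceding proposition and example. The only thing worth being careful about is that Proposition~\ref{Limpliesfans} is invoked with the space $X$ itself being the fan in question, so that ``$S_\omega$-selective'' on the hypothesis side coincides with ``self-selective'' on the conclusion side, and similarly for $S_2$.
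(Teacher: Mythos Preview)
Your proposal is correct and follows exactly the approach intended by the paper: combine Proposition~\ref{Limpliesfans} (applied with $X=S_\omega$ and $X=S_2$ respectively) with the already established facts that $S_\omega$ and $S_2$ fail to be $L$-selective. The paper leaves this corollary without proof precisely because it is an immediate consequence of these results, just as you describe.
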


\section{on $C$-selective spaces}\label{c-sel}
Let us recall the definition of the W-spaces, which was introduced in \cite{gg}. Let $x$ be a point in the topological space $X$, and consider the following two-person infinite game: player I chooses an open set $U_{0}$ containing $x$, and then player II chooses a point $x_{0}\in U_{0}$; player I then chooses another open set $U_1$ containing $x$, player II chooses some point $x_{1}\in U_{1}$, and so on. We say that player I {\it wins} the game if the sequence $\langle x_1,x_2,\ldots\rangle$ converges to $x$. $X$ is called a {\it W-space} if there exists a winning strategy for player I at each point of $X$. Obviously, every first-countable space is a $W$-space. E. Michael proved in \cite{m81} that every first-countable space is strongly $C$-selective; we show that $W$-spaces are strongly $C$-selective too.  Since $W$-spaces are closed under $\Sigma$-products and subspaces, it follows that, e.g., Corson compact spaces are strongly $C$-selective.

\begin{prop} \label{wimplycsel} Every $W$-space is strongly $C$-selective. \end{prop}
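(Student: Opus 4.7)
The plan is to adapt Michael's inductive proof of Theorem~\ref{m81}, using the winning strategy $\sigma_x$ for player I at each point $x\in X$ in the $W$-game as a game-theoretic substitute for the countable neighborhood base in the first-countable case.

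Enumerate $Y=\{y_n:n\in\omega\}$, and for each $x\in X$ fix a monotone winning strategy $\sigma_x$, i.e.\ $\sigma_x(s^\frown t)\subseteq\sigma_x(s)$ for every extension $s^\frown t$ of $s$. I would recursively define $f(y_n)\in\varphi(y_n)$ together with, for each $m\le n$, an open neighborhood $W_m^n\subseteq Y$ of $y_m$ and a ``play tuple'' $s_m^n$ at $y_m$, maintaining the following invariant: if $i_1<\cdots<i_k$ enumerates $\{i:m<i\le n,\ y_i\in W_m^n\}$, then $s_m^n=(f(y_{i_1}),\dots,f(y_{i_k}))$ is a legal play against $\sigma_{f(y_m)}$, and $W_m^n\subseteq\varphi^{-1}(\sigma_{f(y_m)}(s_m^n))$. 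The latter inclusion, provided by l.s.c.\ of $\varphi$, is what ensures that at any subsequent stage, each $y\in W_m^n$ satisfies $\varphi(y)\cap\sigma_{f(y_m)}(s_m^n)\ne\emptyset$, so the play at $y_m$ can be continued legally.

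At stage $n+1$, writing $S=\{m\le n:y_{n+1}\in W_m^n\}$, one must pick $f(y_{n+1})\in\varphi(y_{n+1})\cap\bigcap_{m\in S}\sigma_{f(y_m)}(s_m^n)$. By invariance each individual intersection is nonempty, but the common intersection with $\varphi(y_{n+1})$ is in general not, and this is the principal technical obstacle. I would handle it by processing the $m\in S$ one at a time: whenever a candidate $f(y_{n+1})$ conflicts with the constraint imposed by $y_m$, use the regularity of $Y$ to shrink $W_m^n$ into a smaller open neighborhood of $y_m$ excluding $y_{n+1}$, thereby removing $m$ from $S$. After $f(y_{n+1})$ is chosen in the surviving intersection, each still-active $W_m^n$ is further shrunk by intersecting with $\varphi^{-1}(\sigma_{f(y_m)}(s_m^n{}^\frown f(y_{n+1})))$ to restore the invariant, while a fresh $W_{n+1}^{n+1}$ is picked as an open neighborhood of $y_{n+1}$ contained in $\varphi^{-1}(\sigma_{f(y_{n+1})}(\emptyset))$.

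Continuity of $f$ at each $y_m$ is then extracted from the winning property of $\sigma_{f(y_m)}$ applied to the resulting legal play at $y_m$: the play converges to $f(y_m)$, and given any open $U\ni f(y_m)$ a suitable open neighborhood $V\ni y_m$ with $f(V)\subseteq U$ is assembled from a sufficiently late $W_m^N$ by excising a finite exceptional set (permitted by $T_1$-ness of $Y$), with the invariant ensuring that every remaining $y\in V$ has $f(y)$ inside the relevant strategic output and hence in $U$. The essential difficulty throughout is the compatibility of the joint constraints at each recursion step, and it is here that both the regularity of $Y$ (to surgically exclude points) and the l.s.c.\ of $\varphi$ (to preserve the invariant $W_m^n\subseteq\varphi^{-1}(\sigma_{f(y_m)}(s_m^n))$) are essentially used.
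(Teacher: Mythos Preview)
There is a genuine gap in your continuity argument. The winning condition in the $W$-game guarantees only that legal \emph{plays} converge to $f(y_m)$; it does not guarantee that the strategic neighborhoods $\sigma_{f(y_m)}(s)$ ever lie inside a prescribed open $U\ni f(y_m)$. So the step ``$f(y)$ inside the relevant strategic output and hence in $U$'' is unjustified --- there is in general no $s$ with $\sigma_{f(y_m)}(s)\subseteq U$. Convergence of the play salvages only those $y_k$ that actually appear in the play at $y_m$. But for a point $y_k\in W_m^N$ (with $k>N$) that gets \emph{excluded} at stage $k$ because the running candidate conflicted with $y_m$'s constraint, you have no control over $f(y_k)$ whatsoever, and nothing in your scheme bounds the number of such exclusions at a fixed $m$. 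Hence the ``finite exceptional set'' need not be finite and $f(V)\subseteq U$ can fail. This obstacle is absent in Michael's first-countable argument precisely because the $B_j(f(y_m))$ form a neighborhood \emph{base}: one can nest target neighborhoods along the inductive tree and eventually have them contained in $U$, which is exactly what is unavailable for strategic outputs.

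The paper's proof takes a different route that sidesteps this difficulty. It first builds a countable $Y\subseteq X$ closed under the rule ``whenever $\varphi(c)$ meets $\sigma^y(s)$ with $y\in Y$ and $s\in Y^{<\omega}$, it already meets it inside $Y$.'' Since a countable $W$-space is first countable --- indeed $\{\sigma^y(s)\cap Y:s\in Y^{<\omega}\}$ is a base at $y$ in $Y$ by \cite[Theorem~3.3]{gg} --- the restricted map $\varphi_Y(c)=\varphi(c)\cap Y$ is l.s.c.\ into the first-countable space $Y$, and Michael's original theorem applies directly. This reduction to a suitable countable (hence first-countable) subspace is the idea your argument is missing.
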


\begin{proof} Let $C$ be a countable regular space,  and let $\varphi$ be an l.s.c. mapping from $C$ to the nonempty subsets of the $W$-space $X$.  Let $\sigma^x$ denote a winning strategy for the open set picker in the  $W$-game played at point $x\in X$; i.e., $\sigma^x:X^{<\omega}\rightarrow \{\textrm{open neighborhoods of }x\}$ such that for each sequence $x_0,x_1,...$ of points of $X$:
$$\textrm{If }x_{i+1}\in \sigma^x(\langle x_0,x_1,...,x_i\rangle)\textrm{ for each }i,\textrm{ then } x_i\rightarrow x.$$

Let $Y_0$ be a countable subset of $X$ such that $Y_0\cap \varphi (c)\neq \emptyset$ for each $c\in C$.  Then let $Y_1\supset Y_0$ be countable and satisfy:
$$\forall c\in C\forall y\in Y_0 \forall s\in Y_0^{<\omega} (\varphi (c)\cap \sigma^{y}(s)\neq \emptyset \imp \varphi (c)\cap \sigma^{y}(s)\cap Y_1\neq \emptyset).$$

If $Y_n$ has been defined, let $Y_{n+1}\supset Y_n$ be countable and satisfy:
$$\forall c\in C\forall y\in Y_n \forall s\in Y_n^{<\omega} (\varphi (c)\cap \sigma^{y}(s)\neq \emptyset \imp \varphi (c)\cap \sigma^{y}(s)\cap Y_{n+1}\neq \emptyset).$$

Let $Y=\bigcup_{n<\in\omega}Y_n$.  Then $Y$ is countable and satisfies:

$$\forall c\in C\forall y\in Y \forall s\in Y^{<\omega} (\varphi (c)\cap \sigma^{y}(s)\neq \emptyset \iff \varphi (c)\cap \sigma^{y}(s)\cap Y\neq \emptyset).$$

 Since $Y$ is a countable $W$-space, it is first countable.  Indeed, by \cite[Theorem 3.3]{gg}, for each $y\in Y$, the set
 $$\{\sigma^y(s)\cap Y: s\in Y^{<\omega}\}$$ is a (countable) base at $y$ in the subspace $Y$, and so $$\mathcal B= \{\sigma^y(s)\cap Y: y\in Y, s\in Y^{<\omega}\}$$
 is a countable base for $Y$.

 For each $c\in C$, let $\varphi_Y(c)=\varphi(c)\cap Y$. We claim that $\varphi_Y$ is an l.s.c. mapping from $C$ to the nonempty subsets of $Y$.  It suffices to check, for each $c\in C$ and $B=\sigma^y(s)\cap Y\in \mathcal B$, that $\varphi_Y^{-1}(B)=\varphi^{-1}(\sigma^y(s))$; equivalently, that $$\varphi(c)\cap \sigma^y(s)\neq \emptyset \text{ iff }  \varphi_Y(c)\cap(\sigma^y(s)\cap Y) \neq \emptyset.$$
 The reverse direction is trivial, so suppose $\varphi(c)\cap \sigma^y(s)\neq \emptyset$.  Since $y\in Y$, $y\in Y_n$ for some $n$, and hence $\varphi (c)\cap \sigma^{y}(s)\cap Y_{n+1}\neq \emptyset.$  So $\varphi_Y(c)\cap (\sigma^y(s)\cap Y)=\varphi(c)\cap \sigma^y(s)\cap Y\neq \emptyset$.
   This proves the claim.

 By Michael's theorem, $\varphi_Y$ has a continuous selection $f:C\rightarrow Y$, and such an $f$ is clearly a continuous selection for $\varphi$ as well.
\end{proof}

\begin{prop}\label{prod_go_ctbl} If $X$ is a GO-space and $E$ is a metrizable countable space, then $E\times X$  is $C$-selective. \end{prop}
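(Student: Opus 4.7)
My plan is to adapt the countable-closure argument of Proposition~\ref{wimplycsel}: construct a countable subspace $Y\subseteq E\times X$ so that the restricted mapping $\varphi_Y\colon C\to \mathcal{P}(Y)\setminus\{\emptyset\}$ defined by $\varphi_Y(c)=\varphi(c)\cap Y$ is l.s.c.\ as a map into $Y$, and then invoke Michael's Theorem~\ref{m81} on $\varphi_Y$. The enabling observation is that every countable subspace $Z\subseteq E\times X$ is metrizable (hence first countable): $Z$ embeds in $\pi_E(Z)\times\pi_X(Z)$, $\pi_E(Z)$ is metrizable as a subspace of $E$, and $\pi_X(Z)$ is a countable GO-space, which is second countable metrizable since a countable linear order has only countably many order-theoretic gaps and the GO-topology adds at most countably many half-open ray neighborhoods.

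Fix a countable base $\mathcal{B}_E$ of $E$, and build $Y=\bigcup_n Y_n\subseteq E\times X$ as an increasing union of countable sets. Take $Y_0$ countable with $Y_0\cap\varphi(c)\neq\emptyset$ for each $c\in C$. Having defined $Y_n$, let $\mathcal{N}_n$ be the countable family of rectangles $B\times I$ with $B\in\mathcal{B}_E$ and $I$ a convex open subset of $X$ whose endpoints are drawn from a countable ``test set'' attached to $\pi_X(Y_n)$---concretely, the elements of $\pi_X(Y_n)$, the symbols $\pm\infty$, and selected representatives of the (countably many) gaps of $\pi_X(Y_n)$ in $X$. Choose $Y_{n+1}\supseteq Y_n$ countable so that for every $c\in C$ and every $N\in\mathcal{N}_n$ with $\varphi(c)\cap N\neq\emptyset$, a witness from $\varphi(c)\cap N$ appears in $Y_{n+1}$. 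Set $\mathcal{N}=\bigcup_n\mathcal{N}_n$.

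The principal technical step, and the expected main obstacle, is verifying that $\{N\cap Y:N\in\mathcal{N}\}$ furnishes a base for the subspace topology of $Y$. Given $(e,x)\in Y$ and a basic subspace neighborhood $Y\cap(B\times V)$ with $V$ convex open in $X$ bracketed at $x$ by endpoints $a<x<b$, one needs a convex open $I\subseteq X$ with endpoints in the test set, $x\in I$, and $I\cap\pi_X(Y)\subseteq V$. The case analysis turns on whether $\pi_X(Y)$ accumulates to $x$ from each side and, when it does, whether the corresponding supremum of $\pi_X(Y)\cap(a,x)$ (resp.\ infimum of $\pi_X(Y)\cap(x,b)$) is attained in $\pi_X(Y)$: if attained, use it as the endpoint of $I$; otherwise use a gap-representative from the test set. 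This is where the GO-structure of $X$ matters, since $X$ need not be order-complete and one must deal with points that are isolated in $\pi_X(Y)$ from a side without being isolated in $X$ from that side.

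With the base claim in hand, l.s.c.\ of $\varphi_Y$ is automatic: for $N\in\mathcal{N}$ the closure property of $Y$ yields $\varphi_Y^{-1}(N\cap Y)=\varphi^{-1}(N)$, which is open in $C$ by l.s.c.\ of $\varphi$. Since $Y$ is countable metrizable (hence first countable) and $C$ is countable regular, Michael's Theorem~\ref{m81} produces a continuous selection $f\colon C\to Y\subseteq E\times X$ of $\varphi_Y$; this $f$ is also a continuous selection of $\varphi\colon C\to\mathcal{F}(E\times X)$, completing the argument.
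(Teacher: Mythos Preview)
Your overall skeleton matches the paper's: build a countable $Y\subseteq E\times X$ closed under witnesses for a countable family of rectangles, check that this family traces a base on $Y$, deduce that $\varphi_Y$ is l.s.c., and apply Michael's theorem. But there is a genuine gap in your ``base claim,'' and it is exactly the place where closedness of the values $\varphi(c)$ must enter---which your argument never uses.

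The problem is the case where $p\in\pi_X(Y)$ is a two-sided limit point of $X$ in the GO-topology and one side, say $(p,\to)$, has uncountable coinitiality. Then $p$ is isolated from the right in the countable set $\pi_X(Y)$, so you need some $N=B\times I\in\mathcal N$ with $p\in I$ and $(p,\to)\cap I\cap\pi_X(Y)=\emptyset$. Since $(a,p]$ is \emph{not} open in $X$, your $I$ must be of the form $(a,r)$ with $r$ a test-set point $>p$. But nothing in the inductive construction forces such an $r$ to exist: the gap to the right of $p$ in $\pi_X(Y_n)$ may shrink at every stage. Concretely, take $E$ a point, $X=\omega_1+1$, $\varphi(\omega)=\{\omega_1\}$ and $\varphi(n)=\omega_1+1$; if at each stage you pick witnesses in $\omega_1$ (which your description allows), you obtain $\alpha_0<r_0<\alpha_1<r_1<\cdots<\omega_1$ in the test set, and no test-set $r<\omega_1$ satisfies $(r,\omega_1)\cap\pi_X(Y)=\emptyset$, so $\{N\cap Y:N\in\mathcal N\}$ fails to isolate $\omega_1$ in $Y$. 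A more decisive diagnostic: your argument, if valid, would never use that $\varphi$ is closed-valued and would therefore prove that every GO-space is \emph{strongly} $C$-selective. This is false---$\omega_1+1$ is not even strongly $L$-selective---so the argument cannot go through as written.

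The paper resolves this by taking the opposite trade-off. It allows the half-open intervals $(l_n(y_2),y_2]$ (and analogues) in its base family $\mathcal B(y)$; these are trivially a base for $Y$ by countability. The work then shifts to showing $\varphi^{-1}\bigl(B_1\times(l_n(y_2),y_2]\bigr)$ is open even though the rectangle is not open in $E\times X$. Here closedness of $\varphi(c)$ and countability of $E$ are used: if $\varphi(c)$ met $B_1\times(l_n(y_2),y_{2\alpha})$ for cofinally many $\alpha<\kappa$ (where $\kappa=\mathrm{coinit}(p,\to)$ is uncountable), a pigeonhole on the $E$-coordinate plus closedness would force $(s,y_2)\in\varphi(c)$, contradicting $c\notin\varphi^{-1}(B)$; hence one can replace $y_2$ by a single right endpoint $r_{y_2}$ working for all $c$. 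That argument is the missing ingredient in your proposal.
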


\begin{proof} Let $C$ be a countable regular space,  and let $\varphi$ be an l.s.c. mapping from $C$ to the nonempty closed subsets of the product space $E\times X$.
Given $x\in X$, if the interval $(-\infty,x)$ has $x$ in its closure and has countable cofinality, let $l_n(x), n<\omega $, be an increasing sequence of points converging to $x$; define $r_n(x)$ analogously.  Now define $B_2(x,n)$ as follows:
\begin{enumerate}
  \item[(i)] Let $B_2(x,n)=(l_n(x),r_n(x))$ if both $l_n(x)$ and $r_n(x)$ are defined;
  \item[(ii)]Let $B_2(x,n)=(l_n(x),x]$ if  $l_n(x)$ is defined and $r_n(x)$ is not; if vice-versa, let $B(x,n)=[x,r_n(x))$;
  \item[(iii)]Let $B_2(x,n)=\{x\}$ otherwise.
\end{enumerate}
Let $\mathcal{B}_2(x)=\{B_2(x, n): n\in \omega\}$. For any $z\in E$,we fix a countable local base $\mathcal{B}_1(z)$ at $z$. Then we define $\mathcal{B}(z, x)=\{B_1\times B_2: B_1\in \mathcal{B}_1(z)\text{ and } B_2\in\mathcal{B}_2(x)\}$ which is clearly countable.

Let $Y_0$ be a countable subset of $E\times X$ such that $Y_0\cap \varphi (c)\neq \emptyset$ for each $c\in C$. If $Y_n$ has been defined, let $Y_{n+1}\supset Y_n$ be countable and satisfy:
$$\forall c\in C\forall y\in Y_n \forall B\in \mathcal{B} (\varphi (c)\cap B\neq \emptyset \imp \varphi (c)\cap B\cap Y_{n+1}\neq \emptyset).$$

Let $Y=\bigcup_{n<\in\omega}Y_n$.  Then $Y$ is countable and satisfies:
$$\forall c\in C\forall y\in Y \forall B\in \mathcal{B}(\varphi (c)\cap B\neq \emptyset \iff \varphi (c)\cap B\cap Y\neq \emptyset).$$

 First, we claim that for each $y=(y_1, y_2)\in Y$, $\{B\cap Y: B\in\mathcal{B}(y)\}$ is a base at $y$ in the subspace $Y$ (so in particular $Y$ is first countable). Fix $y=(y_1, y_2)\in Y$.  If both $l_n(y_2)$ and $r_n(y_2)$ are defined as in case (i) above, it is clear that $\mathcal{B}(y)$ is a countable local base at $y$. Now we consider the case (ii) above where $B_2(y_2,n)=(l_n(y_2),y_2]$ because  $l_n(y_2)$ is defined and $r_n(y_2)$ is not.  Since $r_n(y)$ is not defined, then either $y$ has a base in  $X$ of left half-open intervals, or the coinitiality of $(y,\infty) $ is uncountable.  In either case, since $Y$ is countable, $y_2$ has a base in $\pi_2(Y)$ of left half-open intervals. Hence $\{B\cap Y: B\in\mathcal{B}(y)\}$ is a base at $y$ in the subspace$Y$  in this case.  We leave the straightforward checking of the other cases to the reader.

For each $c\in C$, let $\varphi_Y(c)=\varphi(c)\cap Y$. We claim that $\varphi_Y$ is an l.s.c. mapping from $C$ to the closed subsets of $Y$.  Since $\{B\cap Y: B\in\mathcal{B}(y)\}$ is a base at $y$ in the subspace $Y$ for any $y\in Y$, it suffices to show that for each $B\in\mathcal{B}(y)$,$\varphi_Y^{-1}(B\cap Y)$ is open in $C$.  By construction, for each $B\in \mathcal{B}(y)$ we have $\varphi_Y(c)\cap B\cap Y= \varphi(c) \cap B\cap Y$ and  $\varphi(c) \cap B\cap Y\neq \emptyset$
iff $\varphi(c) \cap B\neq \emptyset$.  It follows that $\varphi_Y^{-1}(B\cap Y)=\varphi^{-1}(B)$, so we just need to     prove that for each $B\in\mathcal{B}(y)$, $\varphi^{-1}(B)$ is open.

Fix $y\in Y$ and $B=B_1\times B_2\in\mathcal{B}(y)$. The result clearly holds if $B_2$ is open in $X$, e.g., as in case (i) above.

Suppose $B_2=(l_n(y_2),y_2]$ for some $n\in\omega$ as in case (ii). If $B_2$ is not open in $X$, then $(y_2,\infty)$ has $y_2$ in its closure and has uncountable coinitiality. Fix $\{y_{2\alpha}: \alpha<\kappa\}$ where $\kappa$ is an uncountable cardinal such that $y_{2\alpha}<y_{2\alpha'}$ given $\alpha>\alpha'$ and $y_2\in \overline{\{y_{2\alpha}: \alpha<\kappa\}}$.  Let $A=\{c\in C: \varphi(c)\cap B=\emptyset\}$.
Fix $c\in A$. We claim that there is a $\alpha_c$ such that $B_1\times (l_n(y_2), y_{2\alpha_c})\cap \varphi(c)=\emptyset$. Otherwise, there exists $t_\alpha\in B_1\times (l_n(y_2), y_{2\alpha})$ for each $\alpha\in \kappa$. Since $E$ is countable and $\kappa$ is uncountable, there is a cofinal subset $D$ of $\kappa$ such that $\pi_1(t_\gamma)$'s are same, denoted by $s$, for all $\gamma\in D$.  Then $\{s\}\times\{ y_2\}\in \varphi(c)\cap B$ which is impossible since $c\in A$. Then we let $\beta=\sup\{\alpha_c: c\in A\}$ which is clearly $<\kappa$ and let $r_{y_2}=y_{2\beta}$. Also, $(B_1\times (l_n(y_2), r_{y_{2}}))\cap  \varphi(c)=\emptyset$ for each $c\in A$, i.e. $\varphi^{-1}(B)=\varphi^{-1}(B_1\times (l_n(y_2), r_{y_{2}}))$ which is open since $\varphi$ is l.s.c.  Obviously a similar argument works if $B(y,n)=[y,r_n(y))$.


It remains to prove that $\varphi^{-1}(B)$ is open when $B_2=\{y_2\}$.  This is obvious when $y_2$ is an isolated point in $X$.  The remaining cases are handled similarly to case (ii) by defining $l_{y_2}$ and/or $r_{y_2}$ as in that case if appropriate, and then see that $\varphi^{-1}(B_1\times\{y_2\})$ is equal to the preimage of the open set $B_1\times (l_{y_2},r_{y_2})$, $B_1\times(l_{y_2},y_2]$, or $B_1\times[y_2,r_{y_2})$.

Finally, the proof is finished by an application of Michael's theorem as in the previous proposition.
\end{proof}

\begin{cor}Any GO-space is $C$-selective.

\end{cor}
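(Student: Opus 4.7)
The plan is simply to apply Proposition~\ref{prod_go_ctbl} with $E$ chosen as a one-point space (or equivalently, any singleton topological space). A singleton is vacuously metrizable and countable, so the hypotheses of the proposition are satisfied for any GO-space $X$.

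Specifically, let $X$ be a GO-space and set $E = \{\ast\}$. The product $E \times X$ is canonically homeomorphic to $X$ via the projection $(\ast, x) \mapsto x$. Since $E$ is a countable metrizable space, Proposition~\ref{prod_go_ctbl} guarantees that $E \times X$ is $C$-selective. Transferring this through the homeomorphism, $X$ itself is $C$-selective.

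There is no real obstacle here: the proposition was stated in precisely the generality needed to make this corollary immediate. One could alternatively rerun the argument from Proposition~\ref{prod_go_ctbl} with the $E$-factor suppressed throughout, but this would be pure repetition. The payoff of having proved the slightly stronger product version is exactly that the pure GO-space case drops out for free, while also handling countable metrizable $\times$ GO products that will presumably be useful elsewhere.
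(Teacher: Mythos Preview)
Your argument is correct and is exactly the intended one: the paper states this as an immediate corollary of Proposition~\ref{prod_go_ctbl}, and taking $E$ to be a one-point space is precisely how it follows.
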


There are GO-spaces which are not strongly $C$-selective. Since any strongly $L$-selective space is Frech\'et by Proposition~\ref{sl_f}, the space $\omega_1+1$ is clearly not strongly $L$-selective, hence not strongly $C$-selective.

In the next example, we define two GO-spaces whose product is not $L$-selective.  This shows that the property of being $C$-selective is not productive.
\begin{ex} There are two GO-spaces whose product is not $L$-selective.

\end{ex}

\begin{proof} Let $X=  (\omega_1\cdot \omega+1)\setminus \{\omega_1\cdot n: n\in \omega\}$  and $Y=\omega_1+1$. Define a mapping from $\omega+1$ to $X\times Y$ as follows: $\varphi(n)=\{ (\omega_1\cdot n+\alpha, \alpha): \alpha\in \omega_1\}$ and $\varphi(\infty)=\{(\omega_1\cdot \omega, \omega_1)\}$. For each $n\in\omega$, $\varphi(n)$ is closed in $X\times Y$ since $\omega_1\cdot n\notin X$.

We claim that $\varphi$ is l.s.c. It is enough to show that $\varphi^{-1}(U)$ is open in $\omega+1$ for any open neighborhood  $U$ of $(\omega_1\cdot \omega, \omega_1)$. Let $U$ be an open neighborhood of  $(\omega_1\cdot \omega, \omega_1)$. Then there exist $n\in \omega$ and $\alpha\in\omega_1$ such that $(\{\beta: \omega_1\cdot n\le \beta\le \omega_1\cdot\omega+1\}\times \{\beta: \alpha\leq \beta\leq \omega_1\})\cap (X\times Y)\subseteq U$. Then $(\omega_1\cdot m+\gamma, \gamma)\in \varphi(m)\cap U$ for any $m\geq n$ and $\gamma>\alpha$. Hence $\varphi^{-1}(U)=(\omega+1)\setminus n$ which is clearly open. Therefore, $\varphi$ is l.s.c.

Then for any selection $f$ with $f(n)\in \varphi(n)$, $\sup\{\pi_2(f(n)): n<\omega\}<\omega_1$, hence $f$ can't be continuous. Hence $\varphi$ has no continuous selection. Therefore, $X\times Y$ is not $L$-selective. \end{proof}

The following example shows that the property of being $C$-selective is not hereditary.

\begin{ex}\label{subspace} There is a compact $C$-selective space which has a non-$L$-selective subspace.

\end{ex}

\begin{proof} Let $X=(\omega_1+1)\times (\omega+1)$; this space is $C$-selective by Proposition~\ref{prod_go_ctbl}. Let $Y=(\omega_1\times\omega)\cup \{(\omega_1, \omega)\}$. Define $\varphi$ as follows: $\varphi(\omega)=\{(\omega_1, \omega)\}$ and $\varphi(n)=\omega_1\times\{n\}$ for each $n\in\omega$. It is straightforward to check that $\varphi$ is l.s.c. and has no continuous selection.
\end{proof}

A property $W$-spaces and GO-spaces have in common is that every countable subset is first countable; we denote this property by $CFC$. Observe that it is consistent with ZFC that $L$-selective spaces are $CFC$.  This is because every countable subspace of an $L$-selective space is $\alpha_1$ (by Proposition~\ref{alpha_1}) and Fr\'echet (by Corollary~\ref{count_subspace}), and Dow and Steprans \cite{ds} have shown that it is consistent that every countable Fr\'echet $\alpha_1$-space is first countable. The $CFC$ property was important in the proofs that $W$-spaces and GO-spaces are $C$-selective, but also important was having a definable way in which to describe the countable base at a point in a countable subset.  Indeed, the subspace $Y$ of Example~\ref{subspace} shows that $CFC$ alone does not imply $L$-selective.

It turns out that Fr\'echet $CFC$ (which $W$-spaces satisfy) also does not imply $L$-selective, and least consistently.  Here we present an example of a Fr\'echet $CFC$ space which is not $L$-selective assuming  $\mathfrak t=\omega_1$, an axiom weaker than CH.  One may do essentially the same construction for any value of $\mathfrak t$ and the resulting space will be $CFC$, but it need not be Fr\'echet.  We don't know an example of a Fr\'echet $CFC$-space which is not $L$-selective (or even not $C$-selective) in ZFC.

\begin{ex}\upshape{($\mathfrak t=\omega_1$)} There exists a Fr\'echet $CFC$ space which is not $L$-selective.
\end{ex}
\begin{proof}Let $A_\alpha$, $\alpha<\kappa$, be an almost decreasing sequence of subsets of $\omega$ (i.e., $\alpha<\beta$ implies $A_\beta\subset^*A_\alpha$) with no infinite pseudo-intersection (i.e., there is no infinite $A$ such that $A\subset^*A_\alpha$ for all $\alpha<\kappa$).  Such a family is called a {\it tower}, and $\mathfrak t$ denotes
  the least cardinality of a tower.  It is easy to see that there always exists a tower of some (uncountable) length; $\mathfrak t=\omega_1$ says there is one of length $\omega_1$.

Let $X=(\omega_1\times\omega)\cup\{\infty\}$.  For each $\alpha<\kappa$, let $$H_\alpha=\{(\beta,n)\in X: \beta\leq \alpha\textrm{ and }n\in A_\alpha\}$$ and let $U_\alpha=X\setminus H_\alpha$.  Now let $B(\infty,n)=X\setminus (\omega_1\times n)$, and finally let $\tau$ be the topology on $X$ generated by
$$\{B(\infty,n):n\in\omega\}\cup\{U_\alpha:\alpha<\kappa\}\cup\{\{x\}:x\in \kappa\times\omega\}.$$  So $\infty$ is the only non-isolated point, and it has a local basis of sets of the form $B(\infty,n)\cap\bigcap_{\alpha\in F}U_\alpha$, where $n\in\omega$ and $F$ is a finite subset of $\kappa$.
We will show that $(X,\tau)$ is $CFC$ but not $L$-selective, and if $\kappa=\omega_1$ it is also Fr\'echet.

Let $L_n=\omega_1\times\{n\}$ and define a map $\varphi$ from $\omega+1$ to the closed subsets of $X$ by $\varphi(n)=L_n$ for each $n\in\omega$ and
$\varphi(\omega)=\{\infty\}$.  Note that for each $\alpha<\kappa$, $U_\alpha\cap L_n=\{(\beta,n)\in X: \beta>\alpha\}$, and hence a basic open set of the form
$B(\infty,n)\cap\bigcap_{\alpha\in F}U_\alpha$, where $F$ is finite, meets $L_m$ for all $m\geq n$.  It easily follows that $\varphi$ is l.s.c.

We will show that there is no continuous selection for $\varphi$.  Suppose $f:\omega+1\rightarrow X$ with $f(\omega)=\infty$ and $f(n)\in L_n$ for each $n$.
For each $n\in\omega$, let $\alpha_n<\kappa$ be such that $f(n)=(\alpha_n,n)$.  Pick $\alpha<\kappa$ with $\alpha>\alpha_n$ for all $n$.  If $n\in A_\alpha$, note that
$(\alpha_n,n)\in H_\alpha$ and hence $f(n)\not\in U_\alpha$.  Thus $f^{-1}(U_\alpha)=\omega+1\setminus A_\alpha$ which is not open in $\omega+1$.  Hence $f$ is not continuous, and so $X$ is not $L$-selective.

Next we prove that $X$ is $CFC$.  It suffices to show that every subspace of $X$ of the form $Y=(C\times\omega)\cup\{\infty\}$ is first countable, where $C$ is a countable subset of $\kappa$.
Pick such a $C$, and let $\overline{C}$ denote the closure of $C$ in the ordinal space $\kappa$.  Note that $\overline{C}$ is also countable. We claim that
$$\mathcal{B}=\{Y\cap B(\infty,n)\cap\bigcap_{\alpha\in F}U_\alpha:n\in\omega, F\in[\overline{C}]^{<\omega}\}$$
is a countable base for the subspace $Y$.  Since $\mathcal B$ is closed under finite intersections, it will suffice to show that each $U_\delta$,
$\delta\in\kappa\setminus \overline{C}$, contains some member of $\mathcal B$.  Fix such a $\delta$.  If $\delta<\textrm{min}(C)$, then $U_\delta\supset Y$, so we may assume $\delta\cap C\neq \emptyset$.  Let $\gamma$ be the maximal element of $\overline{C}$ below $\delta$.  Since $A_\delta\subset^*A_\gamma$, the set $F=A_\delta\setminus A_\gamma$
is finite.  Let $n>\textrm{max}(F)$, and suppose $(\beta,m)\in Y\cap B(\infty,n)\cap U_\gamma$.  If $\beta>\delta$, then $(\beta,m)\in U_\delta$, so we may assume
$\beta<\delta$.  Then $\beta\leq\gamma$.  Note that $m\not\in F\cup A_\gamma$, whence $m\not\in A_\delta$ and so $(\beta,m)\in U_\delta$.  Thus $U_\delta$ contains
$Y\cap B(\infty,n)\cap U_\gamma$, a member of $\mathcal B$, and we conclude that $\mathcal B$ is a countable base for $Y$.

Note that up to now we have not used the assumption that $\{A_\alpha:\alpha<\kappa\}$ has no infinite pseudo-intersection.  It will be needed for the proof of Fr\'echet.
Suppose $\kappa=\omega_1$, and that $\infty\in \overline{Y}\setminus Y$ for some $Y\subset X$.  Since $X$ is $CFC$, it will suffice to show that $\infty\in \overline{Z}$ for some
countable $Z\subset Y$.  Let $$A=\{n\in\omega:Y\cap L_n\textrm{ is uncountable}\}.$$ If $A$ is finite, then $\infty$ is in the closure of the countable set $Y\setminus\bigcup_{n\in A}L_n$.  Thus we may assume $A$ is infinite.  Since $\{A_\alpha:\alpha<\kappa\}$ has no infinite pseudo-intersection, we may choose
$\delta<\kappa$ such that $B=A\setminus A_\delta$ is infinite.  For each $n\in B$, choose $\alpha_n>\delta$ with $(\alpha_n,n)\in Y$.  We claim that $Z=\{(\alpha_n,n):n\in B\}$
is sequence converging to $\infty$.  It will suffice to show that any subbasic open neighborhood of $\infty$ contains all but finitely many points of $Z$.  For a neighborhood
of the form $B(\infty,n)$ this is obvious.  Now consider some $U_\gamma$ for $\gamma\in \kappa$.  If $\gamma\leq \delta$, then since $\alpha_n>\gamma$ for all $n\in B$, we
have $U_\gamma\supset Z$.  Suppose then that $\gamma>\delta$.  Since $A_\gamma\subset^*A_\delta$, we have $A_\gamma\cap B$ is finite.  Hence $U_\gamma\cap Z$ is finite, which concludes the proof.
\end{proof}

The following example of a Fr\'echet $CFC$-space which is strongly $C$-selective but not a $W$-space  gives some hope for extending our results a little bit.  The example is an old example of Hajnal and Juhasz (see \cite{gn}) of a Fr\'echet
$CFC$-space which is not a $W$-space, and can be simply described as the one-point compactification of an Aronszajn tree with the interval topology.

\begin{ex}  The one-point compactification of an Aronszajn tree with the interval topology is a Fr\'echet $CFC$-space which is strongly $C$-selective but is not a $W$-space.
\end{ex}

\begin{proof}
 Let $T$ be an Aronszajn tree, i.e., its height is $\omega_1$ and each chain and each level is countable.   We also assume that if $s$ and $t$ are nodes at the same limit level and have the same set of predecessors, then $s=t$.  The interval topology on $T$ is generated by sets
of the form $$(s,t]=\{x\in T: s<x\leq t\}$$ where $s$ and $t$ are elements of the tree with $s<t$.   Then  $T$ with the interval topology is a locally compact Hausdorff space, so it has a one-point compactification $X=T\cup\{\infty\}$.

We observe the following properties of $X$.  For each $t\in T$, let $K_t=\{s\in T:s\leq t\}$, and for $F\in [T]^{<\omega}$ let $K_F=\bigcup_{t\in F}K_t$.  Then every compact subset of $T$ is contained in some such $K_F$, and so $\{X\setminus K_F: F\in [T]^{<\omega}\}$ is an open neighborhood base at $\infty$. In particular, every neighborhood of $\infty$ is co-countable.  Hence if $H$ is a closed subset of $X$, then either $\infty\in H$ or $H$ is countable.  In the latter case, there is some $\delta<\omega_1$ such that $H\subset T_{\leq \delta}$, where   $T_{\leq \delta}$ is the set of all nodes of $T$ of level not greater than $\delta$.

To see that every countable subspace of $X$ is first countable, it suffices to show that $\infty$ has countable character in every subspace of the form $Y=T_{\leq \delta}\cup\{\infty\}$.  Consider a basic open neighborhood $X\setminus K_F$ of $\infty$, where $F$ is a finite subset of $T$. Then
$(X\setminus K_F)\cap Y=(X\setminus K_{F'})\cap Y$, where $F'$ is obtained from $F$ by replacing each $t\in F$ at a level above $\delta$ with its unique predecessor
at level $\delta$.  Then $F'$ is a finite subset of $T_{\leq \delta}$, and there are only countably many such $F'$, so $\infty$ is a point of first countability in
$Y$.  Thus $Y$ is first countable.

See \cite{gn} for the proof that $X$ is not a $W$-space.

It remains to prove that $X$ is strongly $C$-selective.  The following claim will be useful.

{\it Claim. Suppose $A$ is an uncountable subset of $T$.  Then there is $\delta<\omega_1$ such that $\infty\in \cl{A\cap T_{\leq \delta}}$.}
To see this, first note that any antichain of $T$ is closed discrete in $T$ and thus any infinite antichain has $\infty$ in its closure.  Now any uncountable subset
$A$ of $T$ must contain an infinite antichain.  One way to see this is to note that $A$ as a subtree has no uncountable chains, and hence some level
(and so some antichain) must be infinite.  If $A'\subset A$ is a countably infinite antichain, then any $\delta$ such that $A'\subset T_{\leq \d}$ satisfies
the conclusion of the claim.

Now suppose $C$ is a countable regular space and that $\varphi$ is an l.s.c. map from $C$ to the nonempty subsets of $X$.  Let $\delta<\omega_1$ be such that
$\varphi(c)\subset T_{\leq \delta}\cup\{\infty\}$ whenever $\varphi(c)$ is countable, and $\infty\in \cl{\varphi(c)\cap T_{\leq \d}}$ if $\varphi(c)$ is uncountable.
 Let $Y=T_{\leq \delta}\cup\{\infty\}$ and for each $c\in C$ let $\psi(c)=\varphi(c)\cap Y$.

We will show that $\psi$ is an l.s.c. mapping from $C$ to the nonempty  subsets of $Y$.  Suppose $U$ is  relatively open  in $Y$.  As $U\subset Y$, we have
$\psi(c)\cap U\neq \emptyset $ iff $\varphi(c)\cap U\neq \emptyset$ and hence $\psi^{-1}(U)=\varphi^{-1}(U)$.  If $\infty\not\in U$, then since $T_{\leq \delta}$ is open
in $X$, $U$ is open in $X$ and thus $\psi^{-1}(U)$ is open in $C$.  Finally, suppose $\infty \in U$.  Let $U^*$ be open in $X$ such that $U^*\cap Y=U$. Suppose $\varphi(c)\cap U^*\neq \emptyset$. If $\varphi(c)$ is countable, then it is contained in $Y$ and so $\varphi(c)\cap U\neq \0$.  If $\varphi(c)$ is uncountable, then by choice of $\d$,
$\infty\in \cl{\varphi(c)\cap T_{\leq \d}}$, and hence $\varphi(c)\cap U=\varphi(c)\cap (U^*\cap T_{\leq \d}))=U^*\cap (\varphi(c)\cap T_{\leq \d})\neq \0$.  It follows that $\varphi^{-1}(U)=\varphi^{-1}(U^*)$.  But $\psi^{-1}(U)=\varphi^{-1}(U)$, so $\psi^{-1}(U)$ is open.

 Since $\psi$ is l.s.c. and $Y$ is first countable, there is a continuous selection $f:C\rightarrow Y$.  Then $f$  is a continuous selection
 for $\varphi$ as well.
\end{proof}


\section{ $L$-selective spaces and $\mathbb Q$-selective spaces}\label{l-sel-nq-sel}
 Assuming $\mathfrak p=\mathfrak c$, we will construct a countable $L$-selective space $X$ which is not $\mathbb Q$-selective, where $\mathbb Q$ is the space of rational numbers, Since $L$-selective spaces are Fr\'echet $\alpha_1$ and it is consistent that countable Fr\'echet $\alpha_1$-spaces are first-countable, there cannot be a ZFC example like this.  We don't know if there is a ZFC example where $X$ is uncountable.

Let $M_\omega$ denote the countable metric fan; i.e., $M_\omega=(\omega\times \omega)\cup\{\infty\}$ where the points of $\omega\times\omega$ are isolated and a basic open set about $\infty$ is $X\setminus (n\times \omega)$ where $n\in\omega$.  Instead of constructing our example so that it is not $\mathbb Q$-selective directly, it will be convenient to construct
it so that it is not $M_\omega$-selective, as $M_\omega$ is a much simpler space.   The next couple of results show that this will suffice.

The next proposition is surely known (possibly folklore).

\begin{prop} Let $M$ be a countable metrizable space.  Then $M$ is homeomorphic to a closed subspace of $\mathbb Q$. \end{prop}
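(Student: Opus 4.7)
The plan is to invoke Sierpi\'nski's classical characterization of the rationals: any nonempty countable metrizable space without isolated points is homeomorphic to $\mathbb Q$. The key idea is to ``densify'' $M$ by taking its product with $\mathbb Q$ itself, so that Sierpi\'nski's theorem applies to the enlarged space while $M$ sits inside it as a natural closed slice.

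The case $M=\emptyset$ is trivial, so assume $M\neq\emptyset$. I would form the product space $M\times\mathbb Q$ with the product topology and verify the three hypotheses of Sierpi\'nski's theorem. First, $M\times\mathbb Q$ is countable, being the product of two countable sets. Second, it is metrizable, as the product of two metrizable spaces (and $M$ is second countable, being a countable metrizable space). Third, it has no isolated points: a singleton $\{(m,q)\}$ would be open in the product only if $\{q\}$ were open in $\mathbb Q$, which never happens.

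Hence Sierpi\'nski's theorem furnishes a homeomorphism $h:M\times\mathbb Q\to\mathbb Q$. Since $\{0\}$ is closed in $\mathbb Q$, the slice $M\times\{0\}$ is closed in $M\times\mathbb Q$, and it is homeomorphic to $M$ via the first-coordinate projection. Thus $h(M\times\{0\})$ exhibits $M$ as a closed subspace of $\mathbb Q$. The only substantive ingredient is Sierpi\'nski's theorem; the product trick reduces everything else to routine verification, and there is no real obstacle. A more hands-on alternative would enumerate $M=\{x_n\}$ and inductively embed it into $\mathbb Q$ while reserving rational ``buffer'' neighborhoods around each image point to guarantee closedness, but this route is considerably longer and less transparent than the product argument above.
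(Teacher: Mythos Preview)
Your proof is correct and follows essentially the same strategy as the paper's: enlarge $M$ to a countable metrizable space without isolated points, invoke Sierpi\'nski's theorem, and then recover $M$ as a closed subspace. The only difference is the densification step---the paper replaces each isolated point of $M$ by a copy of $\mathbb Q$, while you take the product $M\times\mathbb Q$; your version is a bit cleaner, since metrizability of the product and closedness of the slice $M\times\{0\}$ are immediate.
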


\begin{proof}  Let $Y$ be $M$ with each isolated point of $M$ replaced by a copy of $\mathbb Q$.  Then $Y$ is a countable metrizable space with no isolated points.  By a classical theorem of Sierpinski, $Y$ is homeomorphic to $\mathbb Q$.  Let $Z$ be the subspace of $Y$ obtained by throwing out of $Y$ all points in each copy of $\mathbb Q$ except one, say the point 0.  Clearly $Z$ is closed in $Y$, and $M$ is homeomorphic to $Z$,   \end{proof}

The following corollary is immediate from the previous proposition and Proposition~\ref{c_subs_d}.

\begin{cor}\label{QimpliesY} A $\mathbb Q$-selective space is $Y$-selective for any countable metric space $Y$. \end{cor}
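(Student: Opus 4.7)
The plan is to compose two observations already in hand: the preceding proposition embeds $Y$ as a closed subspace of $\mathbb Q$, and Proposition~\ref{c_subs_d} lets us transfer selectivity from a space to its closed subspaces. Concretely, I would start with a countable metric space $Y$ and apply the previous proposition to obtain a homeomorphism $h\colon Y\to Y'$, where $Y'$ is a closed subspace of $\mathbb Q$.

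Next, assuming $X$ is $\mathbb Q$-selective, I would invoke Proposition~\ref{c_subs_d} with the ambient space $\mathbb Q$ and the closed subspace $Y'\subset \mathbb Q$ to conclude that $X$ is $Y'$-selective. Then, to pass back to $Y$, I would note that $Y$-selectivity is preserved under homeomorphism of the domain space: given an l.s.c.\ mapping $\varphi\colon Y\to\F(X)$, the composition $\varphi\circ h^{-1}\colon Y'\to\F(X)$ is l.s.c., any continuous selection $g$ for $\varphi\circ h^{-1}$ yields a continuous selection $g\circ h$ for $\varphi$, and the analogous statement holds in the ``strongly'' case. Hence $X$ is $Y$-selective.

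There is no real obstacle here; the content of the corollary lies entirely in the two results being combined, and the only thing to verify is the trivial homeomorphism-invariance of $Y$-selectivity.
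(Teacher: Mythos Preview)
Your proposal is correct and matches the paper's own justification exactly: the paper simply states that the corollary is immediate from the previous proposition (embedding any countable metrizable space as a closed subspace of $\mathbb Q$) together with Proposition~\ref{c_subs_d}. Your added remark on homeomorphism-invariance of $Y$-selectivity is the only detail the paper leaves implicit, and it is indeed routine.
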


For the proof of the following result, recall that $\mathfrak p=\mathfrak c$ is equivalent to MA($\sigma$-centered); i.e., given a collection $\mathcal D$ of fewer than $\mathfrak c$-many dense subsets of a $\sigma$-centered poset $\mathbb P$, there is a filter $G$ in $\mathbb P$ which meets every member of $\mathcal D$.

\begin{ex}\label{notMomega}\upshape{($\mathfrak p=\mathfrak c$)} There is a countable  strongly $L$-selective space which is not $\mathbb Q$-selective.\end{ex}

\begin{proof} We will construct a countable space $X$ which is strongly $L$-selective but not $M_\omega$-selective.  By Corollary~\ref{QimpliesY}, $X$ is not $\mathbb Q$-selective either.

The set for $X$ is $\omega^3\cup\{\infty\}$.  Let $\tau_0$ be the topology on $X$ such that points of $\omega^3$ are isolated, and a local base at $\infty$ is $\{X\setminus ( n\times \omega^2 ): n\in\omega\}$.  We are thinking of $(X,\tau_0)$ as the metric fan with each isolated point replaced by a copy of $\omega$ (in fact, it is homeomorphic to the metric fan). The plan is to modify the topology of $(X,\tau_0)$ by adding open sets to $\tau_0$ in an induction of length $\mathfrak c$ to make it strongly $L$-selective but not $M_\omega$-selective.

Define $\psi:M_\omega\rightarrow {\mathcal P}(X)\setminus \{\emptyset\}$ by $\psi(\infty)=\{\infty\}$ and $\psi(m,n)=L_{mn}$, where $L_{mn}=\{m\}\times\{n\}\times\omega$.  When finished with our induction, we want this $\psi$ to be l.s.c.  but have no continuous selection.  To this end, let $\{f_\alpha:\alpha<\mathfrak c\}$ list all functions $f:M_\omega\rightarrow X$ such that $f(\infty)=\infty$ and $f(m,n)\in L_{mn}$ for each $m, n\in\omega$.  We also let $\{\varphi_\alpha: \alpha<\mathfrak c\}$ list all functions $\varphi:\omega+1\rightarrow {\mathcal P}(X)\setminus \{\emptyset\}$ such that $\varphi(\omega)=\{\infty\}$. 
Since $X$ is countable, $\mathcal P(X)$ and $\mathcal{P}(X)^\omega$ have cardinality continuum $\mathfrak c$, so such a listing is possible.

At stage $\alpha$ of the construction, we will first add an open set to the topology of $X$ which makes $f_\alpha$ not continuous.  We then look at $\varphi_\alpha$.  If $\varphi_\alpha$ is not l.s.c. in the topology as defined so far, we do nothing.  If it is, we specify a continuous selection $g_\alpha$ with the intention of keeping it continuous through all later stages.

 Suppose $\alpha<\mathfrak{c}$ and for each $\beta<\alpha$ we have constructed $U_\beta\subset X$, $g_\beta:\omega+1\rightarrow X$, and topology $\tau_\beta$ on $X$ satisfying:

 \begin{enumerate}
   \item[(i)] $\tau_\beta$ is the topology generated by $\tau_0\cup\{U_\gamma:\gamma<\beta\}$;
   \item[(ii)] $U_\beta=X\setminus \{f_\beta(x_i):i\in\omega\}$, where $\{x_i:i\in\omega\}$ is a sequence of points of $M_\omega$ converging to $\infty$;
   \item[(iii)] If $\gamma<\beta$, then $\{f_\beta(x_i):i\in\omega\}\cap \{g_\gamma(n):n\in\omega\}$ is finite.
   \item[(iv)] If $\varphi_\beta$ is closed l.s.c. when $X$ has the topology generated by $\tau_\beta\cup \{U_\beta\}$, then $g_\beta$ is a continuous selection; otherwise, $g_\beta$ is constant $\infty$.
  \end{enumerate}

 We check that (i)--(iv) can be had at step $\alpha$.  Note that $\tau_\alpha$ is given by (i). Let $\mathcal{B}$ be the collection of finite subsets of $\{x_i: x_i\in \{i\}\times\omega \text{ and }i\in \omega\}$ such that for any $s\in\mathcal{B}$, $i\neq i'$ if $x_i, x_{i'}\in s$.  For (ii) and (iii), define $\mathbb{P}_1 =\{\langle s, F\rangle:F \text{ is a finite subset of } \{g_\gamma: \gamma<\alpha\}\text{ and } s\in \mathcal{B}\}$. Then we define  an order $\leq_1$ on $\mathbb{P}_1$ as follows:
   $$\langle s, F\rangle\leq_1 \langle s', F'\rangle \leftrightarrow s\supset s', F\supset F', \forall\gamma\in F'(\{g_\gamma(n): n\in \omega\} \cap \{f_\alpha(x_i): x_i\in s\}\subseteq \{f_\alpha(x_i): x_i\in s'\}). $$

 Notice that for any $\langle s, F_1\rangle$, $\langle s, F_2\rangle\in (\mathbb{P}_1, \leq_1)$, there is an extension $\langle s, F_1\cup F_2\rangle$, i.e. they are compatible. Since $\mathcal B$ is countable, it follows that $\mathbb P_1$ is $\sigma$-centered.

 For each $\gamma\in \alpha$, we define $D_\gamma=\{\langle s, F\rangle: g_\gamma\in F\}$. Then $D_\gamma$ is dense in $(\mathbb{P}_1, \leq_1)$. For any $\langle s, F\rangle\in (\mathbb{P}_1, \leq_1)$, pick $\langle s, F\cup \{\gamma\}\rangle$ which satisfies that $\langle s, F\cup \{\gamma\}\rangle\leq \langle s, F\rangle$. For each $n\in \omega$, we define $E_n=\{\langle s, F\rangle\in\mathbb{P}_1: \pi_1(s)\nsubseteq n\}$ where $\pi_1(s)=\{i: x_i\in s\}$. Take $ \langle s, F\rangle\in\mathbb{P}_1$ and $i>n$. Since $g_\gamma$ is a sequence converging to $\infty$, $\textrm{ran}(g_\gamma)\cap \{f_\alpha(i,m):m\in\omega\}$ is finite for each $\gamma<\alpha$. Since $F$ is finite, there is an $m_i\in \omega$ such that $f_\alpha(i,m_i)\not\in \textrm{ran}(g_\gamma)$ whenever $i>n$ for all $g_\gamma\in F$. Let $x_i=(i,m_i)$. Then $\langle s\cup \{x_i\}, F\rangle\leq \langle s, F\rangle$. Hence, $E_n$ is dense for each $n\in \omega$.

 By $\mathfrak p=\mathfrak c$, there is a filter $G_1$ in $\mathbb{P}_1$ such that $G$ has nonempty intersection with $D_\gamma$ and $E_n$ for each $\gamma\in \alpha$ and $n\in \omega$. Then we define $d_{G_1}=\bigcup\{s: \exists F (\langle s, F\rangle\in G_1)\}$. First, we claim that if $x, y\in d_{G_1}$, then $\pi_1(x)\neq \pi_1(y)$. Otherwise, if $x\in s$ and $y\in s'$, then $\langle s, F\rangle$ and $\langle s', F'\rangle$ are not compatible where $F$ and $F'$ are the sets such that $\langle s, F\rangle$ and $\langle s', F'\rangle$ are in $G_1$. This leads to a contradiction since $G_1$ is a filter. Secondly, since $G_1\cap E_n\neq\emptyset$, $d_{G_1}$ is a sequence converging to $\infty$. So condition (ii) is satisfied. Lastly, fix any $\langle s, F\rangle\in G_1$. We claim that $\{f_{\alpha}(x): x\in d_{G_1}\}\cap \{g_\gamma(n):n\in\omega\}\subset \{f_\alpha(x_i): x_i\in s\}$. Take any $\langle s', F'\rangle\in G$. By the compatibility of $\langle s, F\rangle$ and $\langle s', F'\rangle$, $\{f_{\alpha}(x): x\in s'\}\cap \{g_\gamma(n):n\in\omega\}\subset \{f_\alpha(x_i): x_i\in s\}$.  Since $G_1\cap D_\gamma\neq \emptyset$, $\{f_{\alpha}(x): x\in d_{G_1}\}\cap \{g_\gamma(n):n\in\omega\}$ is finite for any $\gamma<\alpha$. Let $U_\alpha=\{f_\alpha(x_i): x_i \in d_{G_1}\}$.  Then condition iii) is satisfied.

\medskip

 Now consider $\varphi_\alpha$; if it is not l.s.c.  when $X$ has the topology $\tau_{\alpha+1}$ generated by $\tau_\alpha\cup\{U_\alpha\}$,  we let $g_\alpha$ be the sequence which is constant $\infty$.  Otherwise, we define a continuous selection as follows.
 Let $\{O_\gamma: \gamma<\kappa\}$ be a local base at $\infty$ in $(X,\tau_{\alpha+1})$ where $\kappa=|\alpha|$ which is clearly $<\mathfrak c$. For each $\gamma<\kappa$, let $L_\gamma=X\setminus U_\gamma$. We define $\mathcal{H}$ to be the collection of finite subsets of  $\{L_\gamma: \gamma<\kappa\}$. Let $\mathcal{B}_2$ be the collection of finite subsets of $\omega\times X\setminus \{\infty\}$ such that for any $s\in \mathcal{B}_2$, $\pi_1(x)\neq\pi_1(y)$ if $x, y\in S$. Define $\mathbb{P}_2=\{\langle s, H\rangle: H\in \mathcal{H}\text{ and } s\in \mathcal{B}_2\}$. Then we define  an order $\leq_2$ on $\mathbb{P}_2$ as follows:
   $$\langle s, H\rangle\leq_2 \langle s', H'\rangle \leftrightarrow s\supset s', H\supset H', \forall L\in H'(L\cap \{\pi_2(x): x\in s\}\subseteq \{\pi_2(x): x\in s'\}). $$

  For each $\gamma\in \kappa$, let $D_\gamma=\{\langle s, H\rangle: L_\gamma\in H\}$. For each $n\in \omega$, let $E_n=\{\langle s, H\rangle\in\mathbb{P}_2: \pi_1(s)\supset n\}$ where $\pi_1(s)=\{i: i=\pi_(x) \text{ for some } x\in s\}$. It is straightforward as above to verify that $D_\gamma$ and $E_n$ are dense in $\mathbb{P}_2$ for each $\gamma<\kappa$, $n\in \omega$ and that $\mathbb{P}_2$ is $\sigma$-centered.  Again by $\mathfrak p=\mathfrak c$, there is a filter $G_2$ in $\mathbb{P}_2$ such that $G_2$ has nonempty intersection with $D_\gamma$ and $E_n$ for each $\gamma\in \kappa$ and $n\in \omega$. Then we define $d_{G_2}=\bigcup\{s: \exists H\in\mathcal{F} \langle s, H\rangle\in G)\}$. Since $G\cap E_n\neq \emptyset$ for each $n$, the domain of $d_{G_2}$ is $\omega$. Since $G_2\cap D_\gamma\neq \emptyset$ for each $\gamma<\kappa$, $\pi_2(d_G)\cap F_\gamma$ is finite for each $\gamma<\kappa$, i.e. $U_\gamma\setminus \pi_2(d_{G_2})$ is finite. Define $g_\alpha=d_G\cup \{(\omega, \infty)\}$ which is clearly a sequence converging to $\infty$.

  \medskip


We define $\tau_\alpha$, $U_\alpha$, and $g_\alpha$ for all $\alpha<\mathfrak c$ above.
Let $\tau$ be the topology generated by $\tau_0\cup \{U_\alpha:\alpha<\mathfrak c\}$; we show that $(X,\tau)$ is strongly $L$-selective but not
$M_\omega$-selective.

The mapping $\psi$ defined at the beginning of the construction is clearly closed; we check that it is l.s.c..   A basic neighborhood $O$ of $\infty$ in $X$ has the form
 $$[X\setminus (k\times \omega^2)]\cap\bigcap_{\alpha\in F}U_\alpha$$  where $F$ is a finite subset of $\mathfrak c$. The set  $X\setminus (k\times \omega^2)$
 contains $L_{mn}$ for every $m\geq k$, while each $U_\alpha$ contains all but at most one point of every $L_{mn}$.  It follows that $\psi^{-1}(O)=M_\omega\setminus (k\times \omega)$ which of course is open in $M_\omega$.  Thus $\psi$ is l.s.c..

Suppose $f:M_\omega\rightarrow X$ is a selection for $\psi$.  Then $f=f_\alpha$ for some $\alpha$.  But by condition (ii) above, there is a sequence $x_i$, $i\in\omega$, of points of $M_\omega$ converging to $\infty$ but $f_\alpha(x_i)$ does not converge in $X$.  So $f$ is not continuous.  Thus there is no continuous selection for $\psi$ and $X$ is therefore not $M_\omega$-selective.

It remains to check that $X$ is strongly $L$-selective.  Suppose $\varphi$ is an l.s.c. function from $\omega+1$ to the nonempty subsets of $(X,\tau)$.  If $\varphi(\omega)$ contains some isolated point $x$, it follows from l.s.c. that $x\in \varphi(n)$ for almost all $n\in\omega$, so there is a continuous selection which is almost constant.
Hence we may suppose that $\varphi(\omega)=\{\infty\}$. Then $\varphi=\varphi_\alpha$ for some $\alpha<\mathfrak c$ which is l.s.c. in the weaker topology $\tau_\alpha$. At stage $\alpha$, we defined a continuous selection $g_\alpha$ which is continuous at later stages. Then $g_\alpha$ is a continuous selection for $\varphi$ when $X$ has topology $\tau$.  Thus $(X,\tau)$ is strongly $L$-selective.   \end{proof}


\section{$L$-selective vs. strongly $L$-selective}\label{L-not-strong-L}

By Proposition~\ref{prod_go_ctbl}, the ordinal space $\omega_1+1$ is $L$-selective (even $C$-selective), but it is not strongly $L$-selective since it is not Fr\'echet.
In this section we will show that the statement ``A space is strongly $L$-selective iff it is $L$-selective and Fr\'echet" is consistent with and independent of ZFC.  The following is a key proposition.

\begin{prop} Suppose $X$ is a Fr\'echet $L$-selective space, and that every countable subspace of $X$ is first countable.
Then $X$ is strongly $L$-selective. \end{prop}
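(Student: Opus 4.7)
The plan is to apply $L$-selectivity to a closed-valued auxiliary mapping obtained by taking closures, producing a continuous selection $g$ with values in $\cl{\vf(n)}$, and then to use Fr\'echet together with the first countability of countable subspaces of $X$ to correct $g$ to a selection that lands in each $\vf(n)$, while preserving convergence to a chosen point $p\in\vf(\omega)$.

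Fix $p\in\vf(\omega)$ and define $\bar{\vf}:\omega+1\to\F(X)$ by $\bar{\vf}(n)=\cl{\vf(n)}$ for $n<\omega$ and $\bar{\vf}(\omega)=\{p\}$. A short check shows $\bar{\vf}$ is l.s.c.: for open $U\subset X$, if $p\in U$ then $\bar{\vf}^{-1}(U)\supset\vf^{-1}(U)\ni\omega$, hence contains a tail, and on $\omega$ agrees with $\vf^{-1}(U)$; if $p\notin U$, then $\bar{\vf}^{-1}(U)=\vf^{-1}(U)\cap\omega$, which is open in $\omega+1$. Applying $L$-selectivity to $\bar{\vf}$ yields a continuous selection $g$ with $g(\omega)=p$, $g(n)\in\cl{\vf(n)}$, and $g(n)\to p$.

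For each $n$ with $g(n)\notin\vf(n)$, apply Fr\'echet to produce a sequence $\{y^n_k:k\in\omega\}\subset\vf(n)$ converging to $g(n)$. The set $A=\{p\}\cup\{g(n):n\in\omega\}\cup\{y^n_k:n,k\in\omega\}$ is countable, hence first countable by hypothesis. Take a decreasing countable local base $\{W_k:k\in\omega\}$ at $p$ in $A$ and integers $N_0<N_1<\cdots$ with $g(n)\in W_k$ for every $n\geq N_k$. For each $n\geq N_0$, writing $k(n)=\max\{k:N_k\leq n\}$, set $f(n)=g(n)$ if $g(n)\in\vf(n)$, and otherwise pick $f(n)=y^n_j\in\vf(n)\cap W_{k(n)}$ (possible since $W_{k(n)}$ is a relatively open neighborhood of $g(n)$ in $A$ and $y^n_j\to g(n)$). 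Assign $f(n)$ arbitrarily in $\vf(n)$ for $n<N_0$ and put $f(\omega)=p$. Since each $f(n)\in A$ and $f(n)\in W_k$ whenever $n\geq N_k$, any open $V\subset X$ with $p\in V$ satisfies $V\cap A\supset W_k$ for some $k$, so $f(n)\in V$ for all $n\geq N_k$; thus $f$ is a continuous selection for $\vf$.

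The main obstacle is the gap between $L$-selectivity (which handles only closed-valued mappings) and the problem at hand: the selection $g$ coming from $\bar{\vf}$ may take values outside $\vf(n)$. Fr\'echet lets us approximate each such $g(n)$ by points of $\vf(n)$, but converting these local approximations into a single sequence converging to $p$ requires a countable local base at $p$ that simultaneously controls every correction candidate; the hypothesis that countable subspaces are first countable provides exactly this, via first countability of the countable subspace $A$ that collects all points used in the diagonal selection.
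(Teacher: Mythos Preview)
Your proof is correct and follows essentially the same approach as the paper: reduce to a closed-valued map by taking closures, apply $L$-selectivity, use Fr\'echet to approximate each $g(n)$ by a sequence from $\varphi(n)$, and then work inside the resulting countable (hence first countable) subspace to extract a selection converging to $p$. The only difference is in the last step, where the paper packages the approximating sequences into a new l.s.c.\ map $\varphi'$ on the countable subspace and invokes Michael's theorem, whereas you perform the diagonal selection explicitly using a countable base at $p$; these amount to the same thing.
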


\begin{proof}
Suppose $X$ satisfies the hypotheses, and let $\varphi:\omega+1\to \mathcal P(X)\sm\{\0\}$ be l.s.c. Also assume $\varphi(\omega)$ is a singleton $\{p\}$ (which we may, since the modified function is l.s.c. too, for any choice of $p\in \varphi(\omega)$).  Define
$\cl{\varphi}:\omega+1\to \mathcal F(X)$ by $\cl{\varphi}(k)=\cl{\varphi(k)}$ for each $k\in\omega+1$.  If $U$ is open in $X$, then for each $k\in \omega+1$
we have $\varphi(k)\cap U\neq\0$ iff $\cl{\varphi(k)}\cap U\neq \0$, and so $\cl{\varphi}^{-1}(U)=\varphi^{-1}(U)$ which is open.
Thus $\cl{\varphi}$ is l.s.c.  Since $X$ is $L$-selective, there is a continuous selection $f:\omega+1\to X$.

Since $X$ is Fr\'echet and $f(n)\in \cl{\varphi(n)}$, we may for each $n\in\omega$ choose a sequence $\{p_{nm}:m\in\omega\}$ converging to $f(n)$ with $p_{nm}\in \varphi(n)$ for each $m\in \omega$.  Now let $X'=\{p_{nm}:n,m\in\omega\}\cup\{p\}$, and define $\varphi':\omega+1\to \mathcal P(X')\sm\{\0\}$ by $\varphi'(\omega)=\{p\}$ and $\varphi'(n)=\{p_{nm}:m\in\omega\}$ for
each $n\in\omega$.  Since every neighborhood of $p$ in $X$ contains $f(n)$, and hence meets $\varphi'(n)$, for all sufficiently large $n$, it is easy to check that $\varphi'$ is l.s.c.  Let $g:\omega+1\to X'$ be a continuous selection
for $\varphi'$; note that $g$ exists since $X'$ is first countable.  Since $\varphi'(n)\subset \varphi(n)$ for each $n$, it follows that $g$ is a continuous selection for $\varphi$ as well.
\end{proof}

\begin{cor}  It is consistent with ZFC that the following two statements are equivalent for any space $X$:
\begin{enumerate}
  \item[(1)] $X$ is strongly $L$-selective;
  \item[(2)] $X$ is $L$-selective and Fr\'echet.
\end{enumerate}
\end{cor}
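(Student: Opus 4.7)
The strategy is to prove the two directions separately; one is available in ZFC and the other relies on a cited consistency result already invoked earlier in the paper.

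For $(1) \Rightarrow (2)$, I would note that strongly $L$-selective immediately implies $L$-selective directly from the definitions, since every closed-valued l.s.c.\ mapping is in particular an l.s.c.\ mapping into $\mathcal{P}(X)\setminus\{\emptyset\}$. The Fr\'echet conclusion is exactly Corollary~\ref{sl_f}, which holds in ZFC. So this direction is a theorem of ZFC and needs no extra assumption.

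For $(2) \Rightarrow (1)$, the plan is to invoke the preceding proposition, which reduces the task to verifying that every countable subspace of $X$ is first countable. Two ingredients already established in the paper supply this under a suitable axiom. First, Proposition~\ref{alpha_1} gives that any $L$-selective $T_1$-space is $\alpha_1$, and since $L$-selectivity passes to closed subspaces (and $\alpha_1$ is clearly hereditary), every countable subspace of $X$ is $\alpha_1$. Second, Corollary~\ref{count_subspace} gives that every countable subspace of $X$ is Fr\'echet. Thus every countable subspace of $X$ is a countable Fr\'echet $\alpha_1$-space. Now I would invoke the Dow--Steprans consistency result cited in Section~\ref{c-sel}: it is consistent with ZFC that every countable Fr\'echet $\alpha_1$-space is first countable. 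In such a model, every countable subspace of $X$ is first countable, and the preceding proposition then yields that $X$ is strongly $L$-selective.

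The main substantive input is the Dow--Steprans consistency result, which is external to this paper; granted that, no real obstacle remains, because the argument is just an assembly of previously established facts. The only point requiring a little care is that the hypotheses of the previous proposition match exactly what the $(2) \Rightarrow (1)$ hypotheses (together with the chosen set-theoretic axiom) deliver, namely Fr\'echet plus $L$-selective plus every countable subspace first countable; these align precisely, so the implication follows immediately.
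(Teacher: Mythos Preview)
Your proposal is correct and follows essentially the same route as the paper: both directions are handled exactly as you describe, with $(2)\Rightarrow(1)$ obtained in a Dow--Steprans model by combining Corollary~\ref{count_subspace} and Proposition~\ref{alpha_1} to see that countable subspaces are Fr\'echet $\alpha_1$, hence first countable, and then applying the preceding proposition. The only minor remark is that your parenthetical about $L$-selectivity passing to closed subspaces is unnecessary---the clean route is simply that $X$ itself is $\alpha_1$ by Proposition~\ref{alpha_1} and $\alpha_1$ is hereditary---but this does not affect correctness.
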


\begin{proof}
  We already know that (1) imples (2) in any model.  Now consider a model in which every countable Fr\'echet $\alpha_1$-space is first countable (e.g., the model of
  \cite{ds}), and suppose in this model that $X$ is $L$-selective and Fr\'echet.  By Corollary~\ref{count_subspace} and Proposition~\ref{alpha_1}, every countable subspace of $X$
  is Fr\'echet and $\alpha_1$, hence first countable.  Then by the previous proposition, $X$ is strongly $L$-selective.  Hence (1) and (2) are equivalent in this model.
\end{proof}

  Now we show that the equivalence of (1) and (2) above is false in any model of CH.
 Assuming CH, we will construct a countable $L$-selective (hence Fr\'echet) space $X$ which is not strongly $L$-selective.
Our construction will be aided by considering a selective ultrafilter on $\omega$.  Selective ultrafilters are sometimes called Ramsey ultrafilters because of the following characterization due to Kunen: $\mathcal F$ is selective iff for every 2-coloring of $[\omega]^2$ there is a homogenous $F\in\mathcal F$.\footnote{For a set $A$, $[A]^2$ is the set of all 2-element subsets of $A$.  A 2-coloring of a set is a partition of the set into two pieces. A set $F$ is homogeneous for a 2-coloring of $[A]^2$ if the elements of $[F]^2$ all have the same color.} In fact this is the property of selective ultrafilters which we use below. Walter Rudin proved that selective ultrafilters exist under CH; they also exist under various other assumptions, for example Martin's Axiom.  Selective ultrafilters are $P$-points, a fact we also use in our arguments.

\begin{ex}\label{l-not-strong-l}\upshape{(CH)} There is a countable $L$-selective space which is not strongly $L$-selective. \end{ex}

\begin{proof} The set for our space $X$ is $((\omega+1)\times\omega)\cup\{\infty\}$.  Let $\tau_0$ be the topology on $X$ such that points of $\omega\times\omega$ are isolated, a local base at $(\omega,n)$ is $B(\omega,n,k)=\{(m,n):m\geq k\}$, and a local base at $\infty$ is $\{X\setminus ((\omega+1)\times n ): n\in\omega\}$. That is, $(X,\tau_0)$ is the one-point compactification of the topological sum of countably many convergent sequences.

The plan is to modify the topology of $(X,\tau_0)$ by adding open sets to $\tau_0$ in an induction of length $\omega_1$ to make it $(\omega+1)$-selective but not strongly $(\omega+1)$-selective. Define $\psi:\omega+1\rightarrow {\mathcal P}(X)\setminus \{\emptyset\}$ by $\psi(\omega)=\{\infty\}$ and $\psi(n)=L_n$, where $L_n=\omega\times \{n\}$.  Note that $L_n$ is not closed in $X$; it has one  limit point, namely $(\omega,n)$.  When finished with our induction, we want this $\psi$ to be l.s.c. (which is equivalent to every neighborhood of $\infty$ meeting all but finitely-many $L_n$'s) but have no continuous selection.  Obviously, for the space to be $(\omega+1)$-selective, we will need that
the sequence $\{(\omega,n):n\in\omega\}$ still converges to $\infty$ when we are done.

Let $\{f_\alpha:\alpha<\omega_1\}$ list all functions $f:\omega+1\rightarrow X$ such that $f(\omega)=\infty$ and $f(n)\in L_n$ for each $n\in\omega$.  We also let $\{\varphi_\alpha: \alpha<\omega_1\}$ list all functions $\varphi:\omega+1\rightarrow {\mathcal P}(X)\setminus \{\emptyset\}$ such that $\varphi(\omega)=\{\infty\}$ and each specific function appears uncountably often.  Since $X$ is countable, $\mathcal P(X)$ and $\mathcal{P}(X)^\omega$ have cardinality continuum $\mathfrak c=\omega_1$, so such a listing is possible.
At stage $\alpha$ of the construction, we
will  make $f_\alpha$ not continuous by declaring a certain subset of its range closed discrete.     We then look at $\varphi_\alpha$.  If $\varphi_\alpha$ is not closed and l.s.c. in the topology as defined so far, we do nothing.  If it is, we do one of two things: add an open set to $X$ so that $\varphi_\alpha$ is no longer l.s.c., or specify a continuous selection $g_\alpha$ with the intention of keeping it continuous through all later stages.  We will use a selective ultrafilter $\mathcal F$ to make sure the closed discrete sets we add don't interfere with the continuous selections.  The key idea is to make the range of $g_\alpha$ meet only non-ultrafilter many $L_n$'s, i.e., $A_\a=\{k\in\o: (\textrm{ran } g_\a)\cap L_k\neq \0\}\not\in\mathcal F$. Selectivity of $\mathcal F$ is helpful to get this.  Then we can use that $\mathcal F$ is a $P$-point to make $f_\a$ discontinuous by declaring $\{f_a(i):i\in F_\a\}$ closed, where $F_\a\in \mathcal F$ meets each $A_\b$, $\b<\a$, in a finite set.  This preserves continuity of the previously defined $g_\b$'s.

We now give the details.   Fix a selective ultrafilter $\mathcal F$. Suppose $\alpha<\omega_1$ and for each $\beta<\alpha$ we have constructed $U_\beta\subset X$, $V_\beta\subset X$, $g_\beta:\omega+1\rightarrow X$, $, F_\b\in \mathcal F$, $A_\b\in \mathcal P(\omega)\sm \mathcal F$, $E_\b\in \mathcal P(\omega)$, and topology $\tau_\beta$ on $X$ satisfying:

 \begin{enumerate}
   \item[(i)] $\tau_\beta$ is the topology generated by $\tau_0\cup\{U_\gamma:\gamma<\beta\}\cup\{V_\gamma:\gamma<\beta\}$;
   \item[(ii)] $F_\beta\in \mathcal F$ and $U_\beta=X\setminus \{f_\beta(i):i\in F_\beta\}$;
   \item[(iii)] If $\varphi_\beta$ is not closed l.s.c. when $X$ has the topology generated by $\tau_\beta\cup \{U_\beta\}$, then $g_\beta$ is constant $\infty$ and $V_\beta=X$;
   \item[(iv)] If $\varphi_\beta$ is closed l.s.c. when $X$ has the topology generated by $\tau_\beta\cup \{U_\beta\}$, then $\varphi^{-1}(V_\beta)$ is not open and $g_\beta$ is constant $\infty$, or $g_\beta$ is a continuous selection such that $A_\beta=\{k\in\o: (\textrm{ran } g_\beta)\cap L_k\neq \0\}$ is not in $\mathcal F$ .
  \item[(v)] If $V_\b$ is defined as in (iv), then $V_\b=X\sm \bigcup_{n\in E_\b}K_n$, where $K_n$ is a finite subset of $L_n$;
      \item[(vi)] If $\gamma<\beta$, then $E_\beta\cap A_\g$ and $F_\beta\cap A_\g$ are finite.
  \end{enumerate}
 We check that (i)--(vi) can be had at step $\alpha$.  Note that $\tau_\alpha$ is given by (i). Since $\mathcal F$ is a
 $P$-point, we may choose $A\not\in \mathcal F$ such that $A_\b\subset^* A$  for each $\b<\a$.  Then let $F_\a=\omega\sm A$ and  $U_\alpha=X\setminus \{f_\alpha(i):i\in F_\a\}$, and let $\tau_\alpha'$ be the topology generated by $\tau_\alpha\cup\{U_\alpha\}$.  Then $f_\alpha$ is not continuous when $X$ is given topology $\tau_\a'$ or any finer topology. Also, since $F_\a\cap A_\b$ is finite when $\b<\a$, it follows from (iv) that $\{f_\a(i):i\in F_\a\}$ meets the range of $g_\b$ in at most a finite set, so $g_\b$ remains continuous in $\tau'$.

 Now consider $\varphi_\alpha$.  If $\varphi_\alpha$ is not closed and l.s.c., let $g_\beta$ be constant $\infty$ and $V_\beta=X$ as required by (iii).  Suppose on the other hand that $\varphi_\alpha$ is closed l.s.c..  We consider two cases.

 {\it Case 1. For some clopen (in $\tau_\alpha'$) set $U$ containing $\infty$, the set $E(U)=\{k\in \omega\setminus A: \exists n (\varphi_\a(n)\cap U\subset L_k)\}$ is infinite.}\footnote{It is worth noting that $\varphi_\a(n)\cap U$ is closed, so if it is contained in $L_k$ then it is finite.}
 In this case, we proceed to add an open set making $\varphi_\a$ not l.s.c. as follows.  Let $U$ be such that $E(U)$ is infinite, and let $E_\a=E(U)$. Since $E_\a\cap A=\0$, we have that $E_\a\cap A_\b$ is finite when $\b<\a$.  For each $k\in E_\a$, choose $n_k\in \omega$ such that $\varphi_\a(n_k)\cap U\subset L_k$,   and let $V_\alpha=X\sm \bigcup_{k\in E_\a}\varphi_\a(n_k)\cap U$.  Then for each $k\in E_\a$,  $\varphi_\alpha(n_k)\cap U\cap V_\alpha=\emptyset$.  Hence $\varphi_\alpha^{-1}(U\cap V_\alpha)$ is not open in $\omega +1$, so $\varphi_\alpha$ is not l.s.c. when $X$ is given the topology $\tau_{\alpha+1}$ generated by $\tau_\alpha'\cup \{V_\alpha\}$.  Since $V_\alpha$ contains $(\omega+1)\times A$, it also contains a tail of every sequence $g_\b$, $\b<\a$, so $g_\b$ remains continuous.
Let $g_\a$ be constant $\infty$.

{\it Case 2. The set $E(U)$ is finite for every $\tau_\alpha'$-open neighborhood $U$ of $\infty$.}  We then  define a continuous selection $g_\alpha$ as follows.    Since $\tau_\a'$ has a countable base and is 0-dimensional, we can let $O_0, O_1,...$ be a decreasing clopen neighborhood base at $\infty$.  Note that for each fixed $k$ and $n$, the set $\{i\in\omega\sm A: \varphi_\a(i)\cap O_n\subset L_k\}$ is finite, for otherwise $\varphi_\a^{-1}(O_n\cap (X\sm \cl{L}_k))$ would not be open. Since $E(O_n)$ is also finite, we can find natural numbers $k_0<k_1<...$ such that $$i\geq k_n \imp \not\exists k\in\omega\setminus A(\varphi_a(i)\cap O_n\subset L_k).$$  Hence for $i\geq k_n$,  $\varphi_\a(i)\cap O_n$ satisfies at least one of the following:
\begin{enumerate}
  \item[(i)] $ \varphi_a(i)\cap O_n$ meets $((\omega+1)\times A)\cup\{\infty\}$;
  \item[(ii)] $(\omega,k)\in \varphi_\a(i)\cap O_n$ for some $k\in\omega$;
  \item[(iii)] $\varphi_\a(i)\cap O_n$ meets at least two $L_k$'s, $k\not\in A$.
\end{enumerate}
If (iii) occurs, and if also $i<k_{n+1}$, let $d_i\in [\omega\setminus A]^2$ such that $\varphi_0(i)\cap O_n\cap L_k\neq \emptyset$ for each $k\in d_i$.  Then we use selectivity of $\mathcal F$ to show there is a set $D\not\in \mathcal F$ which meets every $d_i$ that has been defined.  To see this, color each $d_i$ black and every other element of $[\omega]^2$ white, and let $C\in \mathcal F$ be homogeneous for this coloring.  Note that for each $k$, $O_n\cap \cl{L}_k=\0$ for all sufficiently large $n$.  Thus no $k$ is in infinitely many $d_i$. It follows that $C$ is homogeneous for white, and so $D=\omega\sm C$ meets each $d_i$.    Now for $k_n\leq i<k_{n+1}$, make the following selection of $x_i$ in $\varphi_\a(i)\cap O_n$: $x_i\in [((\omega+1)\times A)\cup\{\infty\}]$ if (i) occurs,  $x_i=(\omega,k)$ for some $k$ if (i) fails but (ii) holds, and $x_i\in L_k$ for some $k\in d_i\cap D$ if neither (i) nor (ii) hold.  For $i<k_0$, choose any $x_i\in \varphi_\a(i)$.   Then $g_\alpha(i)=x_i$ and $g_\alpha(\omega)=\infty$ defines a continuous selection.  Let $A_\a=\{k\in\o: (\textrm{ran } g_\a)\cap L_k\neq \0\}$; then $A_\a\subseteq A\cup D$, so is not in $\mathcal F$ .

We have now defined $\tau_\alpha$ for all $\alpha<\omega_1$.  Let $\tau$ be the topology generated by the subbase $\tau_0\cup \{U_\a:\a<\omega_1\}\cup\{V_\a:\a<\omega_1\}$; we show that $(X,\tau)$ is $(\omega+1)$-selective but not strongly so. Note that each $U_\a$ and $V_\a$ contains all of $X$ except for finitely many points of some $L_k$'s. Thus the topology of each $\cl{L}_k$ is the same in $\tau$ as it was in $\tau_0$, and every neighborhood of $\infty$ almost contains $L_k$ for all sufficiently large $k$.  It follows that $\psi$ is l.s.c. when $X$ has topology $\tau$.  There is no continuous selection for $\psi$ since each potential one was made discontinuous at some stage of the induction.  So $(X,\tau)$ is not strongly $(\omega+1)$-selective.

It remains to check that $X$ is $(\omega+1)$-selective.  Suppose $\varphi$ is an l.s.c. function from $\omega+1$ to the nonempty closed subsets of $(X,\tau)$.  If $\varphi(\omega)$ meets $\cl{L}_k$ for some $k$, it follows from l.s.c. that $\varphi(n)$ meets $\cl{L}_k$ for all sufficiently large $n$.  Since $\cl{L}_k$ is $C$-selective (it is first countable), it easily follows that $\varphi$ has a continuous selection.

Hence we may suppose that $\varphi(\omega)=\{\infty\}$.  Note that every basic open subset of $(X,\tau)$ appears in some $\tau_\alpha$ with $\alpha<\omega_1$.  Then since $X$ is countable, it follows that every open set (and so every closed set too) appears at some stage before $\omega_1$.  Since each
function  from $\omega+1$ to ${\mathcal P}(X)\setminus \emptyset$ with $\varphi(\omega)=\{\infty\}$ appears uncountably often in the listing $\{\varphi_\alpha: \alpha<\omega_1\}$, there is $\alpha<\omega_1$ such that $\varphi_\alpha= \varphi$ and $\varphi(n)$ is closed in $(X,\tau_\alpha)$ for all $n\in\omega$.  Since $\varphi$ is l.s.c. when $X$
has topology $\tau$, it is also l.s.c. when $X$ has the weaker topology $\tau_\alpha$.  So at stage $\alpha$, $\varphi_\alpha$ is closed l.s.c.,  thus at this stage we either added an open set which made $\varphi_\alpha$ not l.s.c.,
or we defined a continuous selection $g_\alpha$ which we made sure remained continuous at later stages.  Since $\varphi_\alpha=\varphi$ is still l.s.c. when $X$ has topology
$\tau_{\alpha+1}$, we must have done the latter, not the former.  Then $g_\alpha$ is a continuous selection for $\varphi$ when $X$ has topology $\tau$.  Thus $(X,\tau)$ is
$(\omega+1)$-selective.  \end{proof}

\section{Summary and Open Questions}\label{open_q}

Many of the results of this paper are summarized in the following chart. The solid arrow means implication in ZFC, the dashed arrow means implication consistently, and the dashed arrow with $\times$ means that there is a counterexample consistently. Also FU stands for Fr\'echet-Urysohn, and recall that $CFC$ spaces are those in which countable subspaces are first countable.

\medskip
\begin{tikzpicture}[auto,prop/.style={rectangle}]

\node (P) at (0, 7) {First countable space};
\node (G) at (10, 7) {GO space};

\node (P00) at (0, 5.5) {$W$-space};

\node (P1) at (5, 5.5) {Strongly $C$-selective};
\node (P02) at (10, 5.5) {$C$-selective and $FU$};

\node (P10) at (0, 4) {$CFC$ and FU};
\node (P11) at (5, 4) {Strongly $L$-selective};
\node (P12) at (10, 4) {$C$-selective};

\node (P20) at (0, 2.5) {$\alpha_1$ and FU};
\node (P21) at (5, 2.5) {$L$-selective and FU};
\node (P22) at (10, 2.5) {$\mathbb{Q}$-selective};

\node (P32) at (10, 1) {$L$-selective};


\draw[-Latex] (P)--(P00);
\draw[-Latex, bend left] (G) edge (P12);
\draw[-Latex] (P00)--(P10);

\draw[-Latex, bend right] (P10) edge (P20);

\draw[dashed, -Latex, bend right] (P20) edge (P10);
\draw[-Latex] (P21)--(P32.west);
\draw[-Latex] (P21)--(P20);
\draw[-Latex] (P22) edge (P32);
\draw[-Latex] (P12)--(P22);

\draw[-Latex] (P02)--(P12);

\draw[-Latex] (P1)--(P02);
\draw[-Latex] (P1)--(P11);
\draw[-Latex] (P00)--(P1);

\draw[dashed, -latex] (P11.east)--(P22.west);
\node (not2) at (7.75,3.30)  {\scriptsize +};

\draw[dashed, -Latex, bend left] (P21) edge (P11);

\draw[-Latex] (P11) edge (P21);
\draw[dashed, -Latex, bend right] (P21) edge (P11);

\node at (5.3,3.25)  {\scriptsize $\times$};

\draw[dashed, -Latex] (P21.north) -- (P10.east);

\draw[dashed, -Latex] (P10.east) -- (P21.west);
\node (not3) at (2.55,3.1)  {\scriptsize +};


\end{tikzpicture}

We list some open questions below.

\begin{qu} Is there a Fr\'echet $C$-selective space which is not strongly $C$-selective? \end{qu}

\begin{qu}Is there a $\mathbb Q$-selective space which is not $C$-selective?\end{qu}

\begin{qu}
Is there in ZFC a (strongly) $L$-selective space which is not $\mathbb Q$-selective?  Not $C$-selective?
\end{qu}

\begin{qu}
Is there in ZFC a $CFC$ Fr\'echet space which is not $L$-selective? Not $C$-selective?
\end{qu}

 \end{document}